\newtheoremstyle{upright}   %
  {10pt}                    %
  {10pt}                    %
  {}                        %
  {}                        %
  {\bfseries}               %
  {.}                       %
  { }                       %
  {}                        %
\theoremstyle{upright}
\definecolor{theoremblue}{HTML}{2196F3} %
\DeclareSymbolFont{bbold}{U}{bbold}{m}{n}
\DeclareSymbolFontAlphabet{\mathbbold}{bbold}
\DeclareSymbolFontAlphabet{\mathbb}{AMSb}%
\newtheorem{theorem}{Theorem}
\newtheorem*{theorem*}{Theorem}
\newtheorem{corollary}[theorem]{Corollary}
\newtheorem{lemma}[theorem]{Lemma}
\newtheorem*{lemma*}{Lemma}
\newtheorem{prop}[theorem]{Proposition}
\newtheorem{definition}[theorem]{Definition}
\newcommand{\N}[0]{\mathbb{N}}
\newcommand{\R}[0]{\mathbb{R}}
\newcommand{\dd}[0]{\mathrm{d}}
\newcommand{\Span}[0]{\mathrm{Span}}
\DeclareMathOperator{\argmin}{argmin}
\DeclareMathOperator{\supp}{supp}
\DeclareMathOperator{\diam}{diam}
\newcommand{\app}[4]{\left\lbrace\begin{array}{ccc}
   #1 & \longrightarrow & #2 \\
   #3 & \longmapsto & #4 \\
\end{array} \right.}
\newcommand{\oll}[1]{\overline{#1}}
\newcommand{\X}{\mathcal{X}}
\newcommand{\Y}{\mathcal{Y}}
\newcommand{\diamX}{D_\X}
\newcommand{\diamW}{D_W}
\newcommand{\radW}{R_W}
\definecolor{mybluei}{HTML}{ABE6FF}
\newcommand{\ukj}[1]{{\color{Fuchsia}#1}}
\title{Explicit Universal and Approximate-Universal Kernels on Compact Metric Spaces}
\author[1]{Eloi Tanguy}
\affil[1]{Universit\'e Paris Cit\'e, CNRS, MAP5, F-75006 Paris, France}
\date{\today}
\renewcommand{\ukj}[1]{#1}
\begin{document}
\maketitle

\begin{abstract}
    Universal kernels, whose Reproducing Kernel Hilbert Space is dense in the space of continuous functions are of great practical and theoretical interest. In this paper, we introduce an explicit construction of universal kernels on compact metric spaces. We also introduce a notion of approximate universality, and construct tractable kernels that are approximately universal.
\end{abstract}

\tableofcontents
\newpage

\section{Introduction}

\subsection{Kernels in Practice and Related Works}

The theory of Reproducing Kernel Hilbert Spaces (RKHS) was introduced by
Aronszajn \cite{aronszajn1950theory} and has been the object of several
monographs \cite{scholkopf2002learning,christmann2008support,saitoh2016theory}.
In Machine Learning, the first practical uses of RKHSs hinged on the ``kernel
trick'' introduced by
\cite{aizerman1964theoretical,boser1992training,scholkopf1998nonlinear}, which
allows high expressivity of models without the need of an explicit feature map
into the underlying infinite-dimensional space. A cornerstone model is the
Support Vector Machine \cite{cortes1995support}, whose statistical properties
have garnered extensive attention, see for example the monograph
\cite{christmann2008support}. A useful tool is the Kernel Mean embedding (we
refer to the review \cite{muandet2017kernel}) which maps a measure $\mu$ to a
point $M(\mu)$ in a Hilbert space of features, and can be used to compare
measures with the Maximum Mean Discrepancy defined as $\mathrm{MMD}(\mu, \nu) =
\|M(\mu)-M(\nu)\|_H$ which fostered numerous applications
\cite{gomez2009mean,zhang2011kernel,muandet2012learning,fukumizu2013kernel,gretton2012kernel,doran2014permutation,li2017mmd}.
Theoretical guarantees for the MMD depending on the kernel have been reviewed in
\cite{sriperumbudur2011universality}. RKHSs and kernels in general were also
vastly used in statistics and probability, we refer to the monograph
\cite{berlinet2011reproducing} for an overview.

Some theoretical questions remain open, in particular constructing suitable
kernels on non-euclidean metric spaces is a challenging problem that is the
subject of ongoing research. For compact metric spaces,
\cite{christmann2010universal} show the existence of universal kernels (i.e.
such that the associated RKHS is dense in the space of continuous functions)
when the space is continuously embedded into a separable Hilbert space, and
\cite{steinwart2021strictly} relate the notions of universality and strictly
proper kernel scores. On complete Riemannian manifolds,
\cite{jayasumana2015kernel} observe (Theorem 6.2) that the natural Gaussian
kernel $k(x,y) = \exp(-s\ d(x,y)^2)$ is indeed a kernel only in the very
restricted case where the manifold is isometric to $\R^d$. On Hilbert and Banach
spaces, \cite{ziegel2024characteristic} introduce radial kernels and show
universality-adjacent properties. Regarding universality,
\cite{micchelli2006universal} study conditions on the feature maps that ensure
universality.

Our contribution first consists in an \textit{explicit} construction of
universal kernels on a compact metric space $(\X, d_\X)$, in some sense
extending \cite{christmann2010universal} whose construction is not explicit and
relied on the existence of an embedding. The constructed kernels use known
kernels known as \textit{Taylor} and \textit{radial} kernels, which are defined
on compact subsets of separable Hilbert spaces. Noticing that our kernels are
not tractable in practice, we introduce a notion of \textit{approximate
universality} and construct other explicit kernels that are approximately
universal and tractable.

\subsection{Elements of RKHS Theory}
For a set $\X$, a \textit{kernel} $k: \X\times\X\longrightarrow \R$ is a
\textit{positive-definite symmetric} function, which is to say a function that
verifies $k(x,y)=k(y,x)$ and:
$$\forall n\in \N^*,\; \forall (x_1, \cdots, x_n) \in \X^n,\; \forall a\in
\R^n,\; \sum_{i=1}^n\sum_{j=1}^n a_ik(x_i, x_j)a_j \geq 0. $$ By the
Moore-Aronszajn theorem \cite{aronszajn1950theory}, there exists a unique
Hilbert space $(H, \langle \cdot, \cdot \rangle_H)$ of functions
$\X\longrightarrow\R$, such that $H$ contains all basic functions $k(\cdot, x)$,
and its inner product is characterised by the ``reproducing property'' $\langle
k(\cdot, x), k(\cdot, y) \rangle_H = k(x, y)$. Denoting by $\oll{\Span}$ the
Hilbertian completion of the linear span of a set, it follows that $H =
\oll{\Span}\{k(\cdot, x),\; x \in \X\}$. The space $H$ is referred to as the
Reproducing Kernel Hilbert Space (RKHS) associated to the kernel $k$. The
reproducing property of the kernel implies that for any $h\in H$ and $x\in \X$,
we have $\langle h, k(\cdot, x)\rangle_H = h(x)$.

If $k$ is continuous (w.r.t. a metric on $\X$), the RKHS $H$ is contained in the
space of continuous functions from $\X$ to $\R$, denoted $\mathcal{C}(\X)$. In
this work, we will always consider continuous kernels. Some continuous kernels
have an additional property called \textit{universality}:
\begin{definition}\label{def:universal_kernel} A continuous kernel $k$ on a
    compact metric space $(\X, d_\X)$ is said to be \textit{universal} if the
    RKHS $H$ is dense in $(\mathcal{C}(\X), \|\cdot\|_{\infty})$, the space of
    continuous functions from $\X$ to $\R$ equipped with the supremum norm. In
    other words, for any $\varepsilon>0$ and $f\in \mathcal{C}(\X)$, there
    exists $h\in H$ such that $\|f-h\|_\infty \leq \varepsilon$.
\end{definition}

Another equivalent definition of kernels uses the notion of feature map /
feature space pairs: through these lens, a kernel is any map $\X^2
\longrightarrow \R$ such that there exists a Hilbert space $H_0$ and a map
$\Phi_0: \X\longrightarrow H_0$ such that \cref{eqn:def_feature} holds.
\begin{equation}\label{eqn:def_feature}
    \forall x, y \in \X,\; k(x,y) = \langle \Phi_0(x), \Phi_0(y) \rangle_{H_0}.
\end{equation}
The pair ($\Phi_0, H_0$) is called a \textit{feature map / feature space pair}
(or simply \textit{feature pair}) for $k$, and any kernel can be written in this
form \cite[Theorem 4.16]{christmann2008support}. The associated RKHS is then
defined as:
\begin{equation}\label{eqn:def_feature_RKHS}
    H = \{x\longmapsto \langle h_0, \Phi_0(x)\rangle_{H_0},\; h_0\in H_0\}.
\end{equation}
The RKHS $H$ in \cref{eqn:def_feature_RKHS} is unique \cite[Theorem
4.21]{christmann2008support}, and equal to $\oll{\Span}\left\{k(\cdot, x),\;
x\in \X\right\}$ as stated above. The \textit{canonical feature map} is defined
as $\Phi(x) = k(\cdot, x)$, and the pair ($\Phi, H$) is called the
\textit{canonical feature pair} for $k$.

From the space viewpoint, an RKHS can equivalently be defined as a Hilbert space
of functions $\X \longrightarrow \R$ in which the evaluation $\delta_x: h
\longmapsto h(x)$ is continuous for all $x\in \X$, as is done in \cite[Section
4.2]{christmann2008support}. The kernel is then defined as $k(x,y) = \langle
L\delta_x, L\delta_y \rangle_H$, where $L\delta_x \in H$ is the Riesz
representation of $\delta_x \in H'$. In this paper, we stick to the (equivalent)
kernel viewpoint.

For a compact metric space $(E, d_E)$, we will denote by $\diam(E)$ its
diameter, which is defined by $\diam(E) := \max_{(x,y)\in E^2}d_E(x,y)$.
Throughout this work, $\X$ will be assumed to be a compact metric space, and we
denote $\diamX := \diam(\X)$.

The first type of universal kernels of interest in this work are Taylor kernels
(see \cite[Lemma 4.8 and Corollary 4.57]{christmann2008support} for their study
on compact subsets of $\R^d$).
\begin{definition}\label{def:taylor_kernel} Let $W\subset \ell^2$ be a non-empty
    compact set and $\diamW^2:=\diam(W)^2>0$ the square of its diameter. Take a
    sequence $(a_n)_{n\in \N} \in (0, +\infty)^\N$ such that $K(t):=\sum_na_n
    t^n$ converges absolutely on $[-\diamW^2, \diamW^2]$. The Taylor kernel
    associated to $K$ is the map
    \begin{equation}
        k_W := \app{W^2}{\R}{(u,v)}{K(\langle u, v \rangle_{\ell^2})}.
    \end{equation}
\end{definition}
Taylor kernels are shown to be universal on compact subsets of $\ell^2$ in
\cite[Theorem 2.1]{christmann2010universal}. The second type of universal
kernels we will consider are radial kernels\footnote{Radial kernels can be
defined (and shown to be universal) on separable Hilbert spaces and more
\cite{ziegel2024characteristic}, but we will use compactness for other reasons,
and thus restrict to compact subsets of $\ell^2$ for our purposes.}.
\begin{definition}\label{def:radial_kernel} Let $W\subset \ell^2$ be a non-empty
    compact set and $\mu\in \mathcal{M}([0,+\infty))$ a finite Borel measure on
    $[0, +\infty)$ with $\supp(\mu)\neq \{0\}$. The associated radial function
    $K$ and the radial kernel $k_W$ are defined as follows:
    \begin{equation}
        K := \app{\R_+}{\R}{t}{\int_0^{+\infty}e^{-st}\dd\mu(s)},\; 
        k_W := \app{W^2}{\R}{(u,v)}{K(\|u-v\|_{\ell^2}^2)}.
    \end{equation}
\end{definition}
The universality of radial kernels on $W$ is a consequence of \cite[Proposition
5.2]{ziegel2024characteristic} combined with \cite[Theorem
3.13]{steinwart2021strictly}. Note that the well-known Gaussian (or RBF) kernel
$\exp(-\|\cdot-\cdot\|_{\ell^2}^2/(2\sigma^2))$ is a particular radial kernel
with $\mu := \delta_{1/(2\sigma^2)}$.

\subsection{Paper Outline and Contributions}

The objective of this paper is to construct kernels $k$ on a compact metric
space $(\X, d_\X)$ that are \textit{universal} (see
\cref{def:universal_kernel}). We also introduce a notion of approximate
universality (\cref{def:approximate_universal_kernel}), and introduce other
(tractable) explicit kernels $\hat k$ and $k_t$ that verify this property.

\paragraph{Construction of universal kernels in \cref{sec:univ_kernel}} To
construct universal kernels on $\X$, we first introduce an explicit continuous
injection $\varphi: \X \longrightarrow \ell^2$ in \cref{prop:inj_X_l2}. Given
any universal kernel $k_V$ on $V :=\varphi(\X)\subset\ell^2$ we show in
\cref{thm:univ_kernel_X} that $k(x,y) := k_V(\varphi(x), \varphi(y))$, is
universal on $\X$.

The construction of $\varphi$ in \cref{sec:univ_kernel} is based on a countable
basis of $\X$, and the associated kernel requires inner products in $\ell^2$. In
\cref{sec:approx_univ_kernels}, we explain how we can use instead a (finite)
$\eta$-covering of $\X$, yielding a finite-dimensional approximation of the
embedding $\varphi$, with theoretical guarantees. We also investigate the
natural idea of truncating the sequence $\varphi(x)$.

\paragraph{Approximate universal kernels in \cref{sec:approx_univ_kernels}} We
introduce a notion of \textit{approximate universal kernels} on $\X$, which are
kernels $\hat k$ of RKHS $\hat H$ such that for all $\varepsilon>0$ and $f\in
\mathcal{C}(\X)$, there exists $\hat h\in \hat H$ such that $\|f-\hat h\|_\infty
\leq \varepsilon + \rho(f)$, where $\rho(f)>0$ is an error term depending on
$\hat k$ and $f$. We construct a simpler map $\hat \varphi: \X \longrightarrow
\R^J$ as a surrogate for the embedding $\varphi: \X: \longrightarrow \ell^2$,
and embed $\R^J$ into $\ell^2$ appropriately to compare $\varphi$ and
$B\circ\hat\varphi$. This allows us to introduce the kernel $\hat k(x,y) :=
k_W(B\circ\hat\varphi(x), B\circ\hat\varphi(y))$ for a compact set $W \supset
\varphi(\X)\cup B(\hat\varphi(\X))$ and a Taylor or radial kernel $k_W$ on $W$.
In \cref{cor:tractable_hat_k}, we provide a tractable (as in numerically
computable) expression for $\hat k$. Finally, we show in
\cref{thm:approx_univ_kernel} that $\hat k$ is an approximate universal kernel
on $\X$ with an explicit error term $\rho$ depending on discretisation
parameters and the ``complexity'' of the function $f$. In
\cref{sec:truncated_kernels}, we introduce a simple truncation of $\varphi$
which leads to another approximate universal kernel $k_t$ on $\X$.
\newpage
\section{Explicit Universal Taylor and Radial Kernels on a Compact Metric
Space}\label{sec:univ_kernel}

\ukj{
As a preliminary to our main constructions, we begin with two elementary general
properties of RKHS which will be useful throughout this section. We remind that
a \textit{homeomorphism} is a continuous bijection with a continuous inverse.

\begin{lemma}\label{lemma:preliminary_kernels}
    \begin{itemize}
        \item[i)] Let $\X, \Y$ be two sets and $k_\Y :\Y^2 \longrightarrow \R$ a
        kernel on $\Y$, and $\varphi: \X \longrightarrow \Y$ a function. The map
        $k_\X$ defined in \cref{eqn:composition_kernel} is a kernel on $\X$:
        \begin{equation}\label{eqn:composition_kernel}
            k_\X := \app{\X^2}{\R}{(x, x')}{k_\Y(\varphi(x), \varphi(x'))}.
        \end{equation}

        \item[ii)] If additionally $\X$ and $\Y$ are compact metric spaces,
        $\varphi$ is a homeomorphism and $k_\Y: \Y^2 \longrightarrow \R$ is
        universal, the kernel $k_\X$ is universal.
    \end{itemize}
\end{lemma}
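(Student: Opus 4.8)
The plan is to treat the two parts separately: part i) is a routine verification, while part ii) rests on the observation that precomposition with a homeomorphism is an isometric isomorphism of the relevant spaces of continuous functions.

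For part i), symmetry of $k_\X$ is immediate from that of $k_\Y$. For positive-definiteness, fix $n\in\N^*$, points $x_1,\dots,x_n\in\X$ and $a\in\R^n$; then $\sum_{i=1}^n\sum_{j=1}^n a_i k_\X(x_i,x_j) a_j = \sum_{i=1}^n\sum_{j=1}^n a_i k_\Y(\varphi(x_i),\varphi(x_j)) a_j \geq 0$ by applying the defining inequality for $k_\Y$ to the $n$ points $\varphi(x_1),\dots,\varphi(x_n)\in\Y$. Equivalently, in a form reused below: if $(\Phi_0,H_0)$ is a feature pair for $k_\Y$, then $(\Phi_0\circ\varphi, H_0)$ satisfies \cref{eqn:def_feature} for $k_\X$, so $k_\X$ is a kernel and, by \cref{eqn:def_feature_RKHS}, its RKHS is $H_\X = \{x\mapsto\langle h_0,\Phi_0(\varphi(x))\rangle_{H_0} : h_0\in H_0\}$.

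For part ii), note first that $k_\X$ is continuous, since $(x,x')\mapsto(\varphi(x),\varphi(x'))$ is continuous from $\X^2$ to $\Y^2$ and $k_\Y$ is continuous; hence $H_\X\subset\mathcal{C}(\X)$. Consider the composition operator $C_\varphi:\mathcal{C}(\Y)\longrightarrow\mathcal{C}(\X),\ g\longmapsto g\circ\varphi$. Since $\varphi$ is surjective, $\|C_\varphi g\|_\infty = \sup_{x\in\X}|g(\varphi(x))| = \sup_{y\in\Y}|g(y)| = \|g\|_\infty$, so $C_\varphi$ is a linear isometry, in particular injective; it is surjective because $\varphi^{-1}$ is continuous, any $f\in\mathcal{C}(\X)$ being equal to $C_\varphi(f\circ\varphi^{-1})$ with $f\circ\varphi^{-1}\in\mathcal{C}(\Y)$. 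Thus $C_\varphi$ is an isometric isomorphism from $(\mathcal{C}(\Y),\|\cdot\|_\infty)$ onto $(\mathcal{C}(\X),\|\cdot\|_\infty)$.

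Finally, picking a feature pair $(\Phi_0,H_0)$ for $k_\Y$, the RKHS of $k_\Y$ is $H_\Y=\{y\mapsto\langle h_0,\Phi_0(y)\rangle_{H_0} : h_0\in H_0\}$, and comparing with the description of $H_\X$ from part i) yields $H_\X=\{g\circ\varphi : g\in H_\Y\}=C_\varphi(H_\Y)$. Since $k_\Y$ is universal, $H_\Y$ is dense in $(\mathcal{C}(\Y),\|\cdot\|_\infty)$; applying $C_\varphi$, the set $H_\X=C_\varphi(H_\Y)$ is dense in $C_\varphi(\mathcal{C}(\Y))=\mathcal{C}(\X)$, i.e. $k_\X$ is universal. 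The only point requiring care is keeping the two norms apart — the RKHS inner products serve only to pin down $H_\X$ and $H_\Y$ as sets of functions, while density is measured in $\|\cdot\|_\infty$ — and the feature-pair description makes the identification $H_\X=C_\varphi(H_\Y)$ transparent, so I do not anticipate a genuine obstacle.
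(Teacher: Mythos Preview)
Your proof is correct and follows essentially the same approach as the paper: both identify $H_\X = \{g\circ\varphi : g\in H_\Y\}$ via the feature-pair description of the RKHS, and then transfer density along the bijection $g\mapsto g\circ\varphi$ between $\mathcal{C}(\Y)$ and $\mathcal{C}(\X)$. Your presentation packages this transfer as the isometric isomorphism $C_\varphi$, whereas the paper writes out the $\varepsilon$-argument explicitly, but the substance is the same.
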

\begin{proof}
    \textbf{Proof of i):} We verify immediately using the definition that $k_\X$
    is a positive-definite symmetric function on $\X$ using the fact that $k_\Y$
    is such on $\Y$.

    \textbf{Proof of ii):} Let $H_\Y$ be the unique RKHS associated to the
    kernel $k_\Y$ on $\Y$, and $\Phi_\Y: \Y \longmapsto H_\Y$ its canonical
    feature map (i.e. $\forall y \in \Y,\; \Phi_\Y(y) = k_\Y(\cdot, y)$). Since
    $k_\X(x,x') = \langle \Phi_\Y\circ\varphi(x), \Phi_\Y\circ\varphi(x')
    \rangle_{H_\Y}$, the map $\Phi_\Y\circ\varphi$ and the space $H_\Y$ are a
    feature pair for $k_\X$. In the following, given a set $\mathcal{F}$ of
    functions and $g$ a function, we write $\mathcal{F}\circ g := \{f\circ g,\;
    f \in \mathcal{F}\}$. By uniqueness \cite[Theorem
    4.21]{christmann2008support}, it follows that the RKHS $H_\X$ associated to
    $k_\X$ can be written
    $$H_\X = \{x\longmapsto \langle h_\Y,
    \Phi_\Y\circ\varphi(x)\rangle_{H_\Y},\; h_\Y\in H_\Y\}= H_\Y \circ
    \varphi,$$ where the second equality comes from the reproducing property:
    for any $x\in\X$ and $h_\Y\in H_\Y$, we have $h_\Y\circ\varphi(x) = \langle
    h_\Y, \Phi_\Y\circ\varphi(x)\rangle_{H_\Y}$. Since $\varphi$ is a
    homeomorphism, we also have $\mathcal{C}(\X) =\mathcal{C}(\Y)\circ\varphi$.
    Now for $\varepsilon>0$ and $f_\X\in \mathcal{C}(\X)$, take $f_\Y :=
    f_\X\circ \varphi^{-1}\in\mathcal{C}(\Y)$. By universality of $k_\Y$, there
    exists $h_\Y\in H_\Y$ such that $\|f_\Y - h_\Y\|_{\infty} \leq \varepsilon$.
    Taking $h_\X := h_\Y\circ\varphi \in H_\X$ yields $\|f_\X-h_\X\|_\infty \leq
    \varepsilon$, and as a result $k_\X$ is universal.
\end{proof}

\cref{lemma:preliminary_kernels} is useful for the construction of universal
kernels on compact metric spaces $\X$ using universal kernels on another space.
In the following, we will consider a space $\Y$ which is a compact subspace of
the Hilbert space $\ell^2$ of square-summable sequences.
}

\subsection{Injection of \texorpdfstring{$\X$}{S} into \texorpdfstring{$\ell^2$}{l2}}\label{sec:inj_X_l2}

Let $(\X, d_\X)$ be a non-empty compact metric space, and let $\diamX > 0$ be
its diameter. We take a \textit{basis} of $\X$, i.e. a countable sequence
$(x_n)_{n\in \N}$ such that for any $x\in \X$ and $\varepsilon>0$, there exists
$n\in \N$ such that $d_\X(x, x_n) \leq \varepsilon$. Using a basis, we construct
an implicit continuous injection $\varphi$ from $\X$ into $\ell^2$
(\cref{prop:inj_X_l2}), then use universal kernels on $V := \varphi(\X)$ to
build a universal kernel $k$ on $\X$ in \cref{thm:univ_kernel_X}. In
\cref{fig:phi}, we illustrate the injection.
\begin{figure}[H]
    \centering
    \includegraphics[width=0.8\linewidth]{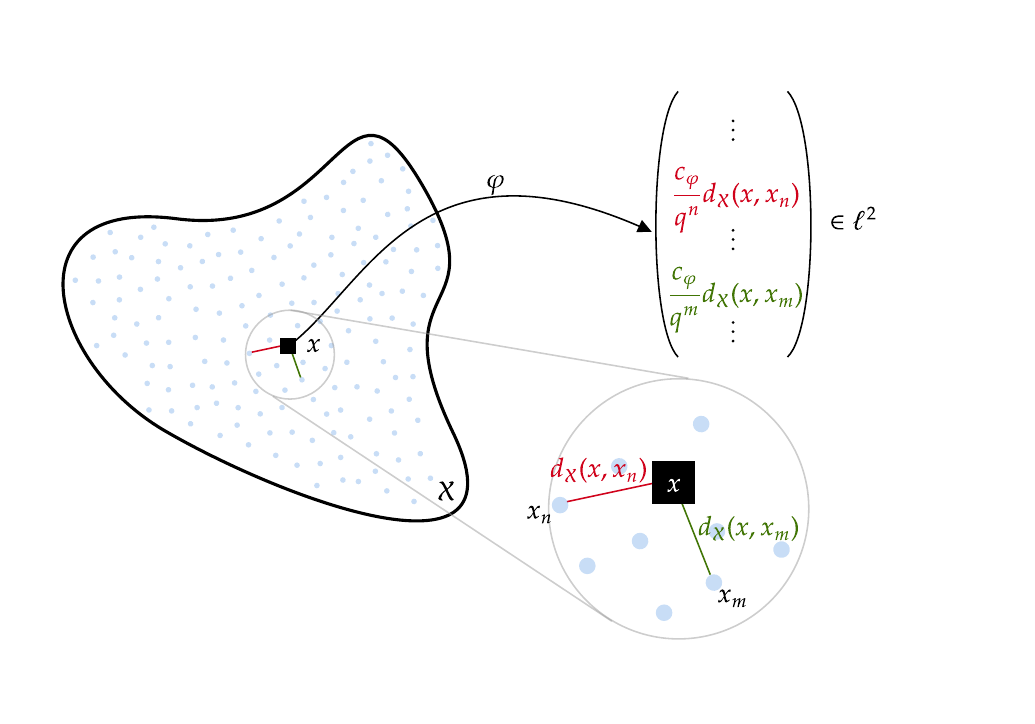}
    \caption{Given a basis $(x_n)_{n\in \N}$ of $\X$, the mapping $\varphi: \X
    \longrightarrow \ell^2$ maps a point $x \in \X$ to the sequence of its
    distances to the points of the basis.}
    \label{fig:phi}
\end{figure}

\begin{prop}\label{prop:inj_X_l2} Let $(x_n)$ a basis of $\X$ and $q>1$. The map
    $$\varphi := \app{\X}{\ell^2}{x}{\left(\cfrac{c_\varphi d_\X(x,
    x_n)}{q^n}\right)_{n\in \N}},\quad c_\varphi := \frac{\sqrt{q^2 -1}}{q}$$ is
    1-Lipschitz and injective.
\end{prop}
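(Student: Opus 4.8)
The plan is to verify the two claims—$\varphi$ maps into $\ell^2$ and is 1-Lipschitz, then that it is injective—essentially by direct estimation, since the construction is engineered for exactly this.

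First I would check that $\varphi(x)$ actually lies in $\ell^2$ for every $x\in\X$. Since $\X$ is compact, $d_\X(x,x_n)\le \diamX$ for all $n$, so the $n$-th coordinate is bounded by $c_\varphi \diamX / q^n$, and $\sum_n c_\varphi^2 \diamX^2 / q^{2n}$ converges as a geometric series with ratio $q^{-2}<1$; in fact it sums to $c_\varphi^2 \diamX^2 \cdot \frac{q^2}{q^2-1} = \diamX^2$ by the choice $c_\varphi = \sqrt{q^2-1}/q$. Next, for the Lipschitz bound, fix $x,y\in\X$. The reverse triangle inequality gives $|d_\X(x,x_n) - d_\X(y,x_n)| \le d_\X(x,y)$ for every $n$, so
\[
\|\varphi(x)-\varphi(y)\|_{\ell^2}^2 = \sum_{n\in\N} \frac{c_\varphi^2}{q^{2n}}\bigl(d_\X(x,x_n)-d_\X(y,x_n)\bigr)^2 \le d_\X(x,y)^2 \sum_{n\in\N}\frac{c_\varphi^2}{q^{2n}} = d_\X(x,y)^2,
\]
again using $\sum_n c_\varphi^2 q^{-2n} = c_\varphi^2 \frac{q^2}{q^2-1} = 1$. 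Taking square roots yields the 1-Lipschitz property, which in particular re-confirms continuity and that the image is bounded.

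For injectivity, suppose $\varphi(x) = \varphi(y)$. Then all coordinates agree, so $d_\X(x,x_n) = d_\X(y,x_n)$ for every $n\in\N$. Now I would use the defining property of a basis: given $\varepsilon>0$, pick $n$ with $d_\X(x,x_n)\le\varepsilon$. Then $d_\X(y,x_n) = d_\X(x,x_n) \le \varepsilon$, so by the triangle inequality $d_\X(x,y) \le d_\X(x,x_n) + d_\X(x_n,y) \le 2\varepsilon$. Since $\varepsilon>0$ was arbitrary, $d_\X(x,y)=0$, hence $x=y$.

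None of the steps is a genuine obstacle—the main (mild) point to get right is bookkeeping the constant $c_\varphi$ so that the geometric series $\sum_n c_\varphi^2 q^{-2n}$ sums to exactly $1$, which is what makes the map $1$-Lipschitz (rather than merely Lipschitz) and is presumably why $c_\varphi$ is defined as it is; injectivity then comes for free from the density of the basis. One could alternatively phrase the whole argument by first noting $\|\varphi(x)-\varphi(y)\|^2 \le d_\X(x,y)^2\sum_n c_\varphi^2 q^{-2n}$ and separately computing that sum, which also makes transparent that replacing $c_\varphi$ by a larger constant would break the Lipschitz bound.
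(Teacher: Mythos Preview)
Your proof is correct and follows essentially the same approach as the paper: the reverse triangle inequality together with the geometric series (normalised by $c_\varphi$) gives the $1$-Lipschitz bound, and density of the basis yields injectivity. The only cosmetic difference is that the paper argues injectivity in the direct direction (taking $x\neq y$, setting $\varepsilon = d_\X(x,y)/3$, and exhibiting a coordinate where $\varphi(x)$ and $\varphi(y)$ differ), whereas you argue the contrapositive; both are equally valid and rely on the same idea.
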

\begin{proof}
    The fact that $\varphi(\X) \subset \ell^2$ comes from the compactness of
    $\X$. Take now $x, y \in \X$:
    \begin{align*}
        \|\varphi(x) - \varphi(y)\|_{\ell^2}^2 
        = c_\varphi^2\sum_{n=0}^{+\infty}
        \cfrac{|d_\X(x, x_n) - d_\X(y, x_n)|^2}{q^{2n}}
        \leq c_\varphi^2\sum_{n=0}^{+\infty}\cfrac{d_\X(x, y)^2}{q^{2n}}
        = \cfrac{c_\varphi^2q^2}{q^2 - 1}d_\X(x, y)^2,
    \end{align*}
    showing 1-Lipschitzness. As for injectivity, consider $x\neq y \in \X^2$ and
    $\varepsilon := d_\X(x, y) / 3 > 0$. Since $(x_n)$ is a basis of $\X$, there
    exists $n\in \N$ such that $d_\X(x, x_n) \leq \varepsilon$. The triangle
    inequality then shows
    $$d_\X(y,x_n) \geq \underbrace{d_\X(y, x)}_{=3\varepsilon} -
    \underbrace{d_\X(x, x_n)}_{\in [0, \varepsilon]} \geq 2\varepsilon, $$ and
    thus $|\underbrace{d_\X(y, x_n)}_{\geq 2\varepsilon} - \underbrace{d_\X(x,
    x_n)}_{\in [0, \varepsilon]}|\geq \varepsilon$, allowing us to conclude
    $$\|\varphi(y) - \varphi(x)\|_{\ell^2}^2 \geq c_\varphi^2\cfrac{|d_\X(y,
    x_n) - d_\X(x, x_n)|^2}{q^{2n}} \geq
    c_\varphi^2\cfrac{\varepsilon^2}{q^{2n}} > 0.$$
\end{proof}

\subsection{Universal Kernels on \texorpdfstring{$\X$}{X}}\label{sec:univ_kernels}

We can now build universal kernels $k: \X^2\longrightarrow \R$ using $\varphi$
and a universal kernel \ukj{$k_W: W^2 \longrightarrow \R$ (for example Taylor or
radial) on $W$ a compact subset of $\ell^2$ containing $V := \varphi(\X)$}. The
technique follows closely that of \cite[Theorem 2.2]{christmann2010universal}.
Note that thanks to the 1-Lipschitzness of $\varphi$, we have
$\diam(\varphi(\X)) \leq \diam(\X) =: \diamX$.
\begin{theorem}\label{thm:univ_kernel_X} Let $V := \varphi(\X)\subset \ell^2$
    and \ukj{$W$ be a compact subset of $\ell^2$ containing $V$. Consider $k_W:
    W^2 \longrightarrow \R$ a universal kernel on $W$} (e.g. Taylor as in
    \cref{def:taylor_kernel} or radial as in \cref{def:radial_kernel}). The
    kernel
    $$k := \app{\X^2}{\R}{(x,y)}{k_W(\varphi(x), \varphi(y))} $$ is universal on
    $\X$.
\end{theorem}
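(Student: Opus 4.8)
The plan is to reduce this statement directly to Lemma~\ref{lemma:preliminary_kernels}, since all the hard work has been set up. The key observation is that $\varphi: \X \longrightarrow V = \varphi(\X)$ is a homeomorphism: it is continuous (indeed $1$-Lipschitz by Proposition~\ref{prop:inj_X_l2}), bijective onto its image $V$ by construction (injectivity is part of Proposition~\ref{prop:inj_X_l2}), and its inverse is automatically continuous because $\X$ is compact and $V$ is Hausdorff (a continuous bijection from a compact space to a Hausdorff space is a homeomorphism). So $\varphi$ viewed as a map $\X \longrightarrow V$ satisfies the hypotheses of part ii) of the lemma, provided we have a universal kernel on $V$.

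**Key steps, in order.** First I would note that $V = \varphi(\X)$ is a compact subset of $\ell^2$, being the continuous image of the compact space $\X$; hence it is itself a compact metric space. Second, I would observe that $k_W$ restricted to $V^2$ — call it $k_V := k_W|_{V^2}$ — is a universal kernel on $V$. This needs a small argument: universality of $k_W$ on $W$ means its RKHS is dense in $\mathcal{C}(W)$; restricting to the closed subset $V \subseteq W$, one uses that the restriction map $\mathcal{C}(W) \to \mathcal{C}(V)$ is surjective (Tietze extension, since $V$ is closed in the metric space $W$) and that the RKHS of $k_W|_{V^2}$ is exactly $\{h|_V : h \in H_W\}$ — this is a standard fact about restricting kernels (see \cite{christmann2008support}), so density transfers. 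Alternatively, and more simply, if one only wants to handle the specific examples: a Taylor kernel on $W$ restricts to a Taylor kernel on $V$ (same $K$, same coefficients $(a_n)$, since $\diam(V) \le \diam(W)$ keeps absolute convergence on the relevant interval), and likewise a radial kernel on $W$ restricts to a radial kernel on $V$ with the same measure $\mu$; in either case universality on $V$ is exactly the cited result (\cite{christmann2010universal} Theorem~2.1, resp. \cite{ziegel2024characteristic}+\cite{steinwart2021strictly}) applied directly to the compact set $V$. Third, I would apply Lemma~\ref{lemma:preliminary_kernels} ii) with the pair $(\X, V)$, the homeomorphism $\varphi$, and the universal kernel $k_V$ on $V$, concluding that $k(x,y) = k_V(\varphi(x), \varphi(y)) = k_W(\varphi(x), \varphi(y))$ is universal on $\X$.

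**Main obstacle.** The only non-mechanical point is justifying that $k_W$ restricts to a universal kernel on the subspace $V$; everything else is a routine compactness/Hausdorff argument plus an invocation of the lemma. I would handle the restriction cleanly by giving the general RKHS-restriction argument (RKHS of the restricted kernel = restrictions of functions in $H_W$, combined with Tietze extension for surjectivity of $\mathcal{C}(W) \to \mathcal{C}(V)$), which covers all universal $k_W$ at once and makes the "e.g. Taylor or radial" parenthetical genuinely just an example rather than a case analysis. If one preferred to avoid Tietze, the fallback is the explicit verification that the Taylor/radial structure is preserved under restriction, which is immediate from Definitions~\ref{def:taylor_kernel} and~\ref{def:radial_kernel} since $\diam(V) \le \diamX \le \diam(W)$.
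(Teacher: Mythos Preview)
Your proposal is correct and follows essentially the same route as the paper: restrict $k_W$ to $V$, note that the restriction remains universal, observe that $\varphi:\X\to V$ is a homeomorphism (continuous bijection from compact to Hausdorff), and invoke Lemma~\ref{lemma:preliminary_kernels}~ii). The only cosmetic difference is that the paper dispatches the ``restriction preserves universality'' step by citing \cite{christmann2008support} Lemma~4.55~iii) directly, whereas you sketch the Tietze-based argument yourself (or fall back on the Taylor/radial-specific check); both are valid and amount to the same thing.
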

\begin{proof}
    \ukj{We introduce $k_V: V^2 \longrightarrow \R$ the restriction of $k_W$ to
    $V^2$. By \cite[Lemma 4.55 item iii)]{christmann2008support}, $k_V$ remains
    universal.} Since $\X$ is a compact metric space and $\ell^2$ is Hausdorff,
    the co-restriction of $\varphi$ to $V$ denoted $\varphi_V: \X
    \longrightarrow V$ is a homeomorphism. \ukj{Noticing that $k = (x, y) \in
    \X^2 \longmapsto k_V(\varphi_V(x), \varphi_V(y))$, by
    \cref{lemma:preliminary_kernels}, $k$ is thus a universal kernel on $\X$.}
\end{proof}

\paragraph{A strictly convex functional on $\mathcal{P}(\X)$} We consider the
set $\mathcal{P}(\X)$ of probability measures on $\X$. As a universal kernel,
$k$ is also \textit{characteristic} (see \cite{sriperumbudur2011universality}
and use the compactness of $\X$), which is to say that the map
$$M := \app{\mathcal{P}(\X)}{H}{\mu}{\int_\X k(\cdot, x) \dd\mu(x)}, $$ known as
the \textit{kernel mean embedding} \cite{sriperumbudur2010hilbert}, is
injective. One can show that the map
$$F := \app{\mathcal{P}(\X)}{\R_+}{\mu}{\|M(\mu)\|_H^2} $$ is continuous with
respect to the weak convergence of measures (apply \cite[Theorem
A.1]{hable2011qualitative} using that $x\longmapsto k(\cdot, x)$ is continuous
and bounded). Furthermore, by linearity of $M$ and strict convexity of
$\|\cdot\|_H^2$, the function $F$ is strictly convex \ukj{with respect to the
``vertical'' convex combination of probability measures: 
$$\forall \mu, \nu \in \mathcal{P}(\X),\; \forall t\in (0, 1),\; F((1-t)\mu +
t\nu) < (1-t)F(\mu) + tF(\nu).$$} Note that the fact that $M$ is injective is
required to prove \textit{strict} convexity.
\newpage
\section{Approximate Universal Kernels}\label{sec:approx_univ_kernels}

In practice, the function $\varphi$ introduced in \cref{prop:inj_X_l2} is not
tractable \ukj{(in the sense that computing and storing a full sequence
$\varphi(x)\in \ell^2$ is numerically impossible)}, limiting the use of the
kernels proposed in \cref{thm:univ_kernel_X} \ukj{in their exact formulation}.
\ukj{The natural idea of truncating the sequence $\varphi(x)$ to the first $N$
terms is tackled later in \cref{sec:truncated_kernels}, we begin with the main
approach of the paper, which relies on a discretisation of the space $\X$.}

We will now introduce a family of tractable kernels which are approximately
universal on $\X$. Throughout this section, the kernels $k_W$ on a compact
subset $W$ of $\ell^2$ that we will consider are Taylor or radial (see
\cref{def:taylor_kernel,def:radial_kernel}). Our objective is to construct
another kernel $\hat k$ with a simpler explicit mapping $\hat \varphi:
\X\longrightarrow \R^J$, yielding an RKHS $\hat H$ which we will show to be
approximately universal in the sense of \cref{def:approximate_universal_kernel}.
\begin{definition}\label{def:approximate_universal_kernel} Let $\hat{k}:
    \X^2\longrightarrow \R$ be a kernel on $\X$ of RKHS $\hat H$ and $\rho:
    \mathcal{C}(\X) \longrightarrow \R_+$ be an ``error'' function. We say that
    $\hat{k}$ is an \textit{approximate universal kernel} on $\X$ if for all
    $\varepsilon>0$ and $f\in \mathcal{C}(\X)$, there exists $\hat h \in \hat H$
    such that $\|f-\hat h\|_\infty \leq \varepsilon + \rho(f)$.
\end{definition}

\subsection{Constructing a Smaller RKHS \texorpdfstring{$\hat
H$}{hatH}}\label{sec:build_Hat} 

In this section, we provide a principled method to ``sub-sample'' the sequence
$\varphi(x)$: \ukj{we will begin with a well-chosen finite family $(y_j) \in
\X^J$ and construct a well-suited basis $(x_n) \in \X^\N$ such that} a distance
sequence $(d_\X(x, x_n))_{n\in \N}$ is adequately approximable by the finite
number of distances $(d_\X(x, y_j))_{j\in \llbracket 1, J \rrbracket}$. We
illustrate this discretisation concept in \cref{fig:discretisation}.

\begin{figure}[H]
    \centering
    \includegraphics[width=0.4\linewidth]{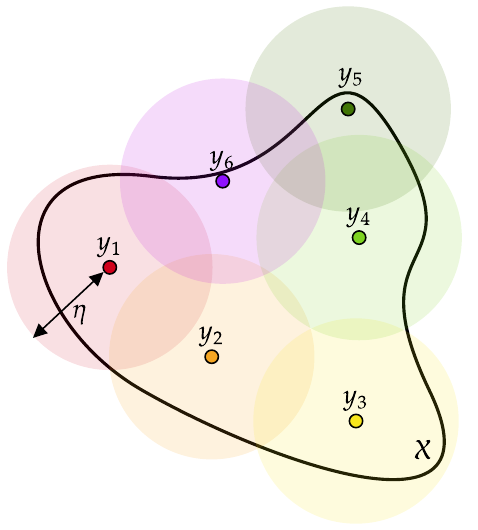}
    \caption{Discretisation of the space $\X$ into a cover of $J$ balls of
    radius $\eta>0$ centred at each $(y_j)_{j\in \llbracket 1, J \rrbracket}$.}
    \label{fig:discretisation}
\end{figure}

Instead of a basis of $\X$, we will now fix $\eta\in (0, \diamX]$ and consider
$(y_j)_{j \in \llbracket 1, J \rrbracket}$ a family of distinct points of $\X$
such that the family of (closed) balls $B_{d_\X}(y_j, \eta)$ covers $\X$. In
\cref{eqn:embed_X_RJ}, we introduce a map $\hat\varphi: \X \longrightarrow \R^J$
in the spirit of $\varphi$ defined in \cref{prop:inj_X_l2}, which we visualise
in \cref{fig:hatphi}.
\begin{equation}\label{eqn:embed_X_RJ}
    \hat\varphi:= \app{\X}{\R^J}{x}{\left(\cfrac{d_\X(x, y_j)}{\sqrt{J}}\right)_
    {j\in\llbracket 1, J \rrbracket}}.
\end{equation}

\begin{figure}[H]
    \centering
    \includegraphics[width=0.8\linewidth]{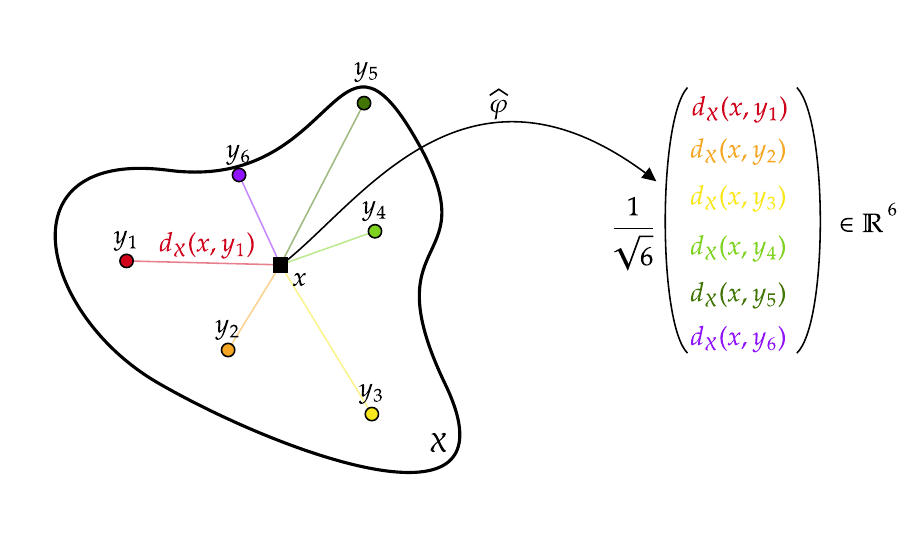}
    \caption{The mapping $\hat\varphi: \X \longrightarrow \R^J$ maps a point
    $x\in \X$ to the vector of normalised distances between $x$ and the centres
    $y_j$ of the covering.}
    \label{fig:hatphi}
\end{figure}
It is immediate to verify that $\hat\varphi: (\X, d_\X) \longrightarrow (\R^J,
\|\cdot\|_2)$ is 1-Lipschitz, thanks to the $J^{-1/2}$ normalisation. In
\cref{prop:B_isometry}, we show how to embed $\R^J\supset\hat\varphi(\X)$ into
$\ell^2$ with a mapping $B$, which will allow us to compare the RKHS induced by
$\hat\varphi$ and a particular $\varphi: \X\longrightarrow \ell^2$. To construct
$B$, we  first begin with a geometric series separation lemma, which will be
convenient to deal with the factor $\frac{1}{q^n}$ in $\varphi$.
\begin{lemma}\label{lemma:split_power_series} Let $J \geq 2,\; q \in (1, 1 +
    \frac{1}{J-1})$ and coefficients $(\lambda_1, \cdots, \lambda_J) \in (0,
    1)^J$ such that $\sum_j\lambda_j=1$, there exists $\alpha: \N
    \longrightarrow \llbracket 1, J \rrbracket$ with for all $j\in \llbracket 1,
    J \rrbracket,\; \alpha^{-1}(\{j\})$ infinite such that:
    \begin{equation}\label{eqn:alpha_split}
        \forall j \in \llbracket 1, J \rrbracket,\; 
        \sum_{n\in \alpha^{-1}(\{j\})}
        \cfrac{1}{q^n} = \lambda_j\cfrac{q}{q-1}.
    \end{equation}
\end{lemma}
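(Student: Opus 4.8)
The goal is to distribute the indices $\N = \{0,1,2,\dots\}$ into $J$ infinite classes $\alpha^{-1}(\{1\}), \dots, \alpha^{-1}(\{J\})$ so that the sub-series of $\sum_n q^{-n}$ over class $j$ equals exactly $\lambda_j \frac{q}{q-1}$; note that $\sum_{n\in\N} q^{-n} = \frac{q}{q-1}$ and $\sum_j \lambda_j = 1$, so the total budget matches. The natural strategy is a greedy / ``water-filling'' assignment: process the indices $n = 0, 1, 2, \dots$ in order, and assign $n$ to whichever class $j$ is currently ``most under-filled'' relative to its target $\lambda_j \frac{q}{q-1}$. Concretely, I would define running partial sums $S_j^{(n)} := \sum_{m < n,\ \alpha(m) = j} q^{-m}$ and, at step $n$, set $\alpha(n) := j$ for some $j$ maximising the deficit $\lambda_j \frac{q}{q-1} - S_j^{(n)}$. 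One then proves two things: (a) the deficits all converge to $0$, which gives \eqref{eqn:alpha_split}; and (b) each class receives infinitely many indices.

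The key quantitative input is the hypothesis $q \in (1, 1 + \frac{1}{J-1})$, equivalently $(J-1)(q-1) < 1$, equivalently $\frac{q^{-n}}{1 - \text{(something)}}$ controls things — more precisely, the tail $\sum_{m \geq n} q^{-m} = \frac{q^{-n+1}}{q-1}$ compared against the single term $q^{-n}$: their ratio is $\frac{q}{q-1}$, but what matters is that after placing term $n$ into the neediest bin, the remaining terms $q^{-n-1}, q^{-n-2}, \dots$ summing to $\frac{q^{-n}}{q-1}$ suffice to top up the other $J-1$ bins to within the right tolerance. The bound $(J-1)(q-1) < 1$ is exactly what makes $q^{-n} > \frac{(J-1) q^{-n-1}}{q - 1} \cdot (q-1)$-type inequalities work, i.e. it guarantees that no bin's deficit can run away: each term is large enough relative to the total remaining budget divided among the other bins. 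I would formalize this by showing by induction that $\max_j \big(\lambda_j \tfrac{q}{q-1} - S_j^{(n)}\big) \leq C q^{-n}$ for a suitable constant $C$ (depending on $J,q$ but not $n$), using that the greedy choice reduces the maximum deficit and that all deficits stay nonnegative because the total remaining budget $\sum_{m\geq n} q^{-m}$ always equals $\sum_j (\lambda_j \frac{q}{q-1} - S_j^{(n)})$.

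For step (b), infinitude of each fiber, I would argue that if class $j$ received only finitely many indices, its partial sum $S_j$ would stabilize at some value strictly below its target $\lambda_j \frac{q}{q-1}$ (strictly, since $\lambda_j \in (0,1)$ and only finitely many powers are used), contradicting deficit $\to 0$. One should double-check the greedy rule is well-defined when ties occur (pick the smallest such $j$, say) and that the equalities — not just inequalities — in \eqref{eqn:alpha_split} hold: this follows because $\sum_j S_j^{(n)} = \sum_{m<n} q^{-m} \to \frac{q}{q-1} = \sum_j \lambda_j \frac{q}{q-1}$ together with each $S_j^{(n)} \uparrow$ some limit $\leq \lambda_j \frac{q}{q-1}$, forcing every limit to hit its target exactly.

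The main obstacle I anticipate is pinning down the right induction hypothesis on the deficits so that the greedy step provably contracts the maximum deficit at a geometric rate; the constant $C$ must be chosen compatibly with the base case at $n=0$ (where the deficits are $\lambda_j \frac{q}{q-1}$, all at most $\frac{q}{q-1}$), and the inductive step must use $(J-1)(q-1)<1$ to show that redistributing $q^{-n}$ to the largest deficit, with the remaining $\frac{q^{-n}}{q-1}$ available for later, keeps $\max_j \text{deficit} \leq C q^{-n-1}$. An alternative, possibly cleaner, route avoids an explicit rate: just show (i) $S_j^{(n)} \leq \lambda_j \frac{q}{q-1}$ for all $n$ (so the process never overshoots — this needs the greedy rule plus $(J-1)(q-1)<1$ to ensure a valid bin always exists) and (ii) $\sum_j S_j^{(n)} \to \frac{q}{q-1}$, then conclude each $S_j^{(n)} \to \lambda_j\frac{q}{q-1}$ by a squeeze; infinitude follows as above. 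I would present whichever of these the author's proof most resembles, but the water-filling idea with the feasibility invariant $S_j^{(n)} \le \lambda_j \frac{q}{q-1}$ is, I expect, the cleanest.
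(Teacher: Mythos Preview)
Your alternative route is essentially the paper's proof: define $\alpha$ inductively so that the invariant $S_j^{(N)} < \lambda_j \frac{q}{q-1}$ holds at every step (the paper picks the \emph{first} $j$ that can accept the new term rather than the most-under-filled one, but either rule works), verify feasibility by the contradiction that if no bin can accept $q^{-(N+1)}$ without overshooting then summing over $j$ yields $q \geq 1+\tfrac{1}{J-1}$, and conclude equality by squeeze since $\sum_j S_j^{(N)} \to \frac{q}{q-1}$.

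One point to tighten: your stated reason why a finite fiber would give a sum \emph{strictly} below target (``$\lambda_j\in(0,1)$ and only finitely many powers are used'') is not a valid justification on its own --- for particular $\lambda_j$ and $q$ a finite sub-sum could in principle hit $\lambda_j\frac{q}{q-1}$ exactly. The paper gets strictness for free by maintaining the \emph{strict} inequality $S_j^{(N)} < \lambda_j S$ at every finite $N$ (this is precisely the feasibility check above), so a stabilised partial sum would be strictly below target, contradicting the squeeze. With your greedy rule the same strictness holds automatically: the maximal deficit is at least the average deficit $\tfrac{1}{J}\cdot\tfrac{q^{1-n}}{q-1}$, and the hypothesis $q<\tfrac{J}{J-1}$ is exactly the statement that this average exceeds $q^{-n}$, so the chosen bin retains positive deficit after receiving $q^{-n}$.
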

\begin{proof}
    Set $S := \frac{q}{q-1}$. We will construct a sequence $(\alpha(N))_{N\in
    \N}$ by induction over $N$, verifying the property $$P_N: \forall j \in
    \llbracket 1, J \rrbracket,\; \sum_{n\in \llbracket 0, N\rrbracket\ :\
    \alpha(n)=j}\cfrac{1}{q^n} < \lambda_j S.$$ 
    
    \textit{Initialisation}: set $\alpha(0)$ the first $j\in \llbracket 1, J
    \rrbracket$ such that $1 < \lambda_j S$. Note that such a $j$ exists,
    otherwise summing over $j\in \llbracket 1, J\rrbracket$ yields 
    $$J \geq \cfrac{q}{q-1} > \cfrac{1 + \frac{1}{J-1}}{1 + \frac{1}{J-1} - 1} =
    J,$$ which is a contradiction. We have defined $\alpha(0):=j$ verifying
    $P_0$.

    \textit{Induction step}: let $N \in \N$, suppose $P_N$ true. We show that
    there exists $j\in \llbracket 1, J \rrbracket$ such that
    \begin{equation}\label{eqn:split_power_series_induction}
        \sum_{n\in \llbracket 0, N\rrbracket\ :\ \alpha(n)=j}\cfrac{1}{q^n} +
        \cfrac{1}{q^{N+1}} < \lambda_j S
    \end{equation}
    by contradiction. If that were not the case, we would have by summing
    \cref{eqn:split_power_series_induction} over $j \in \llbracket 1, J
    \rrbracket$:
    $$\sum_{n=0}^{N}\cfrac{1}{q^n} + \cfrac{J}{q^{N+1}} \geq S, $$ which by
    computation is equivalent to $q \geq 1 + \frac{1}{J-1}$, obtaining a
    contradiction. Selecting $j \in \llbracket 1, J \rrbracket$ such that
    \cref{eqn:split_power_series_induction} holds, we can set $\alpha(N+1) := j$
    which satisfies $P_{N+1}$.

    Now that $\alpha: \N \longrightarrow \llbracket 1, J \rrbracket$ verifying
    $(P_N)$ is constructed, we introduce the convergent series
    $$\forall j \in \llbracket 1, J \rrbracket,\; \forall N \in \N,\;  S_N^{(j)}
    := \sum_{n\in \llbracket 0, N \rrbracket\ : \ \alpha(n)=j}\cfrac{1}{q^n},\;
    S_\infty^{(j)} := \underset{N\longrightarrow +\infty}{\lim} S_N^{(j)}. $$
    Thanks to $(P_N)$, for all $j \in \llbracket 1, J \rrbracket$ taking the
    limit yields $S_\infty^{(j)}\leq \lambda_jS$, and summing over $j \in
    \llbracket 1, J \rrbracket$ gives $\sum_j S_\infty^{(j)} = S$, hence
    necessarily for all $j\in \llbracket 1, J \rrbracket,\; S_\infty^{(j)} =
    \lambda_j S$.
    
    Finally, observing the strict inequality in $P_N$ at each $N\in \N$ shows
    that $\alpha^{-1}(\{j\})$ has to be infinite, concluding the proof.
\end{proof}

We now turn to constructing an embedding $B: \R^J \longrightarrow \ell^2$, which
will allow us to compare $\hat\varphi$ and $\varphi$. An important property of
$B$ will be the correspondence between the inner products in $\R^J$ and $\ell^2$
(i.e. $B$ will be an isometry). The construction of this embedding revolves
around the construction of an adapted basis $(x_n)_{n\in \N}$ of $\X$ which is
balanced with respect to the covering by the balls $B(y_j, \eta)$, as
illustrated in \cref{fig:adapted_basis}.

\begin{figure}[H]
    \centering
    \includegraphics[width=0.8\linewidth]{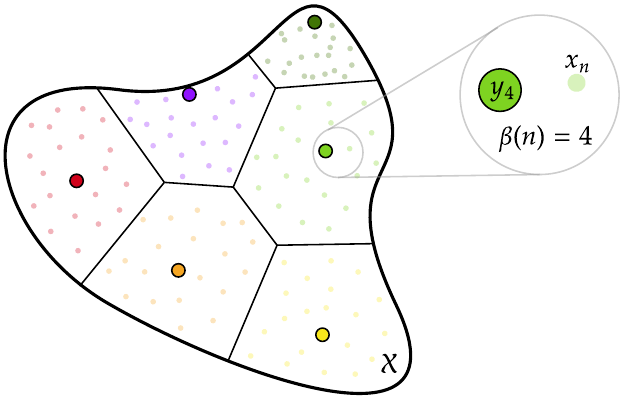}
    \caption{The basis $(x_n)_{n\in \N}$ is such that there equally as many
    $(x_n)$ in each region $\X_j$ of points closest to $y_j$. In the figure, we
    observe a zoom on the region $\X_4$, where the example point $x_n$ is
    closest to $y_4$. In mathematical terms, we write this property as $\beta(n)
    = y_j$, and in \cref{prop:adapted_basis} we will construct $(x_n)$ such that
    the sum $\sum_nq^{-2n}$ is split evenly between the sets
    $\beta^{-1}(\{j\})$.}
    \label{fig:adapted_basis}
\end{figure}

\begin{prop}\label{prop:adapted_basis} Take $q \in (1, \sqrt{1 +
    \frac{1}{J-1}})$. There exists a basis $(x_n)_{n\in \N}$ of $\X$ and a
    mapping $\beta: \N \longrightarrow \llbracket 1, J \rrbracket$ with infinite
    pre-images which verifies $\forall n \in \N,\; d_\X(x_n, y_{\beta(n)}) =
    \min_j d_X(x_n, y_j)$, and with the following property:
    \begin{equation}\label{eqn:balanced_basis}
        \forall j \in \llbracket 1, J \rrbracket,\; 
        \sum_{n\in \beta^{-1}(\{j\})}
        \cfrac{1}{q^{2n}} = \cfrac{1}{J}\cfrac{q^2}{q^2-1}.
    \end{equation}
\end{prop}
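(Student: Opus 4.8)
The goal is to construct, simultaneously, a basis $(x_n)$ of $\X$ and a labelling $\beta : \N \to \llbracket 1, J \rrbracket$ with infinite fibres, such that $\beta(n)$ always records a nearest $y_j$ to $x_n$ and the geometric weights $q^{-2n}$ are distributed across the fibres exactly according to the proportions $1/J$. Since $q \in (1, \sqrt{1 + 1/(J-1)})$, we have $q^2 \in (1, 1 + 1/(J-1))$, so \cref{lemma:split_power_series} applies with the exponent $q$ there replaced by $q^2$ and the coefficients $\lambda_j := 1/J$: it produces a map $\alpha : \N \to \llbracket 1, J \rrbracket$ with infinite fibres satisfying $\sum_{n \in \alpha^{-1}(\{j\})} q^{-2n} = \frac{1}{J}\frac{q^2}{q^2-1}$ for every $j$. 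So the weight-splitting is already done; the remaining work is purely geometric: realise this prescribed fibre structure by an actual basis whose points lie in the right Voronoi-type regions.

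The plan is as follows. First, partition $\X$ into measurable-enough pieces $\X_j := \{x \in \X : d_\X(x, y_j) = \min_i d_\X(x, y_i)\}$ (breaking ties, say, by smallest index), so that $x \in \X_j$ forces $\beta$-consistency automatically. Each $\X_j$ is nonempty — indeed $y_j \in \X_j$ up to the tie-break, and more importantly the covering hypothesis $\bigcup_j B_{d_\X}(y_j,\eta) = \X$ guarantees every point of $\X$ lands in some $\X_j$. Next, within each $\X_j$, pick a countable dense subset $(z^{(j)}_m)_{m \in \N}$ of the closure $\overline{\X_j}$ — this exists since $\X$ is compact metric, hence separable, hence so is every subspace. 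Then define $(x_n)$ by interleaving: set $x_n := z^{(\alpha(n))}_{r_j(n)}$ where, for $n$ in the fibre $\alpha^{-1}(\{j\})$ enumerated increasingly as $n^{(j)}_0 < n^{(j)}_1 < \cdots$, we let $x_{n^{(j)}_m} := z^{(j)}_m$; set $\beta := \alpha$. There is one subtlety: the dense subset was taken in $\overline{\X_j}$, not $\X_j$, and a limit point of $\X_j$ may fail to lie in $\X_j$ — but by continuity of all the distance functions, such a point still satisfies $d_\X(x, y_j) = \min_i d_\X(x, y_i)$ (the defining equation is closed), so the $\beta$-consistency condition $d_\X(x_n, y_{\beta(n)}) = \min_i d_\X(x_n, y_i)$ survives. (If one prefers to stay inside $\X_j$, one can instead note $\X_j$ is separable and pick the dense sequence in $\X_j$ itself, at the cost of it being dense only in $\X_j$, not $\overline{\X_j}$ — which is fine for the basis property below.)

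It then remains to check the three required properties. \textbf{(i)} $(x_n)$ is a basis of $\X$: given $x \in \X$ and $\varepsilon > 0$, $x$ lies in some $\X_j$, and $\{x^{(j)}_m\}_m = \{z^{(j)}_m\}_m$ is dense in $\X_j$ (or $\overline{\X_j}$), so some $x_n = z^{(j)}_m$ with $d_\X(x, x_n) \le \varepsilon$; since the $\X_j$ cover $\X$, this works for every $x$. \textbf{(ii)} The fibre $\beta^{-1}(\{j\}) = \alpha^{-1}(\{j\})$ is infinite for each $j$ by \cref{lemma:split_power_series}, which is exactly why we needed infinitely many dense points $z^{(j)}_m$ available — they get consumed one per index of the fibre. \textbf{(iii)} The balancing identity \cref{eqn:balanced_basis} is immediate: $\sum_{n \in \beta^{-1}(\{j\})} q^{-2n} = \sum_{n \in \alpha^{-1}(\{j\})} q^{-2n} = \frac{1}{J}\frac{q^2}{q^2-1}$, transported verbatim from the conclusion of \cref{lemma:split_power_series} applied to $q^2$.

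The only place needing care — and what I would flag as the main obstacle — is the interaction between the Voronoi regions and density: one must make sure each $\X_j$ is nonempty (covering hypothesis), handle the tie-breaking cleanly so that $\beta$-consistency is an equation that is stable under the constructions used, and confirm that separability of $\X$ indeed descends to each $\X_j$ so that the countable dense sequences $(z^{(j)}_m)$ exist. None of this is deep, but the proof should state explicitly why $\bigcup_j \X_j = \X$ (this is where the $\eta$-covering hypothesis, rather than just $J < \infty$, is actually used) and why a closed defining relation lets us pass to closures without breaking the nearest-point condition. Everything else is bookkeeping: re-indexing the fibres and invoking \cref{lemma:split_power_series} with $q \rightsquigarrow q^2$ and $\lambda_j \equiv 1/J$.
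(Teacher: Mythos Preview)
Your approach is essentially identical to the paper's: partition $\X$ into Voronoi cells $\X_j$ (with tie-break by smallest index), take a dense sequence in each cell, apply \cref{lemma:split_power_series} with $q \rightsquigarrow q^2$ and $\lambda_j = 1/J$ to get the labelling, then interleave via bijections between each infinite fibre and $\N$. One small correction: the $\eta$-covering hypothesis is \emph{not} used here --- the facts that $\bigcup_j \X_j = \X$ and that each $\X_j$ is nonempty follow simply from the $y_j$ being a finite family of distinct points (every $x$ has a nearest $y_j$, and $y_j \in \X_j$ since $d_\X(y_j,y_j)=0$), so your flagged ``main obstacle'' is not an obstacle at all.
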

\begin{proof}
    Consider for $j\in \llbracket 1, J \rrbracket$ the set $\X_j := \{x\in \X :
    \argmin_{m}\ d_\X(x, y_m) = j\}$ (with disambiguation by taking the smallest
    minimiser if multiple exist). By definition, the sets $\X_j$ are disjoint
    and cover $\X$. Since $(\X, d_\X)$ is a compact metric space, each subset
    $\X_j$ is separable, allowing us to choose a basis $(z_n^{(j)})_{n\in \N}$
    of $\X_j$ for each $j\in \llbracket 1, J \rrbracket$. By
    \cref{lemma:split_power_series}, we can choose $\beta: \N \longrightarrow
    \llbracket 1, J \rrbracket$ with infinite pre-images which verifies
    \cref{eqn:balanced_basis}. Since for each $j\in \llbracket 1, J \rrbracket$,
    the set $\beta^{-1}(\{j\})\subset \N$ is infinite, we can choose $\omega_j:
    \beta^{-1}(\{j\}) \longrightarrow \N$ a bijection. We can now define
    $\forall n \in \N,\; x_n := z_{\omega_{\beta(n)}(n)}^{(\beta(n))}$, which is
    a basis of $\X$ since $\cup_j\X_j = \X$ and 
    $$\{x_n\}_{n\in \N} = \bigcup_j\
    \{z_{\omega_j(m)}^{(j)}\}_{m\in\beta^{-1}(\{j\})} = \bigcup_j\
    \{z_n^{(j)}\}_{n\in \N},$$ by construction. Furthermore, by definition, we
    have $\forall n \in \N,\; \argmin_{j}\ d_\X(x_n, y_j) = \beta(n)$, which
    shows that the mapping $\beta$ satisfies the desired properties.
\end{proof}
Using the adapted basis from \cref{prop:adapted_basis}, we can finally construct
an isometry $B: \R^J \longrightarrow \ell^2$:
\begin{prop}\label{prop:B_isometry} Take a basis $(x_n)$ of $\X$ and $\beta: \N
    \longrightarrow \llbracket 1, J \rrbracket$ as in \cref{prop:adapted_basis}.
    The mapping $B$ defined below is an isometry:
    \begin{equation}
        B := \app{\R^J}{\ell^2}{(u_j)_{j=1}^J}{
        \left(c_B\cfrac{u_{\beta(n)}}{q^n}\right)_{n\in \N}},\; 
        c_B := \frac{\sqrt{J(q^2-1)}}{q}.
    \end{equation}
\end{prop}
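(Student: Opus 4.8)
The plan is to verify directly that $B$ is linear and norm-preserving; since $B$ maps between Hilbert spaces, the polarisation identity then upgrades norm-preservation to preservation of inner products, which is what ``isometry'' means here. Linearity is immediate from the defining formula, so the only two points to check are that $B$ actually takes values in $\ell^2$ and that $\|Bu\|_{\ell^2} = \|u\|_2$ for every $u \in \R^J$.

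First I would fix $u = (u_j)_{j=1}^J \in \R^J$ and write
\[
\|Bu\|_{\ell^2}^2 = \sum_{n=0}^{+\infty} c_B^2\,\frac{u_{\beta(n)}^2}{q^{2n}},
\]
a convergent series since $\sum_n q^{-2n} < +\infty$ for $q>1$ (so in particular $Bu \in \ell^2$). Then, by Tonelli's theorem for non-negative series, I would regroup the sum according to the value $\beta(n) \in \llbracket 1, J\rrbracket$:
\[
\|Bu\|_{\ell^2}^2 = c_B^2 \sum_{j=1}^J u_j^2 \sum_{n \in \beta^{-1}(\{j\})} \frac{1}{q^{2n}}.
\]
The balancing identity \cref{eqn:balanced_basis} of \cref{prop:adapted_basis} gives $\sum_{n \in \beta^{-1}(\{j\})} q^{-2n} = \frac{1}{J}\frac{q^2}{q^2-1}$ for each $j$, whence
\[
\|Bu\|_{\ell^2}^2 = c_B^2\,\frac{1}{J}\frac{q^2}{q^2-1}\,\|u\|_2^2,
\]
and it remains only to observe that $c_B = \sqrt{J(q^2-1)}/q$ was chosen precisely so that $c_B^2 \cdot \frac{1}{J}\frac{q^2}{q^2-1} = 1$, giving $\|Bu\|_{\ell^2} = \|u\|_2$. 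Running the same computation with $u_j^2$ replaced by $u_j v_j$ yields $\langle Bu, Bv\rangle_{\ell^2} = \langle u, v\rangle_{\R^J}$ directly, avoiding any appeal to polarisation.

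There is no real obstacle in this statement: all the difficulty has already been absorbed into \cref{lemma:split_power_series} and \cref{prop:adapted_basis}, which are exactly what produce a map $\beta$ with infinite fibres and the crucial balancing property \cref{eqn:balanced_basis}. Once that is in hand, the proof of \cref{prop:B_isometry} is the one-line regrouping computation above together with the bookkeeping on the constant $c_B$.
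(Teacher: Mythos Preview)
Your proof is correct and follows the same approach as the paper: regroup the series defining $\|Bu\|_{\ell^2}^2$ (or directly $\langle Bu,Bv\rangle_{\ell^2}$) according to the fibres $\beta^{-1}(\{j\})$, invoke the balancing identity \cref{eqn:balanced_basis}, and check that the constant $c_B$ makes the result equal to $\langle u,v\rangle_{\R^J}$. The paper does the inner-product computation directly rather than passing through norms first, but this is a cosmetic difference only.
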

\begin{proof}
    The mapping $B$ is clearly linear, and for $u, v\in \R^J$ we compute using
    \cref{eqn:balanced_basis}:
    \begin{align*}
        \left\langle B(u), B(v)\right\rangle_{\ell^2}
        = \sum_{n=0}^{+\infty}\cfrac{c_B^2}{q^{2n}}u_{\beta(n)}
        v_{\beta(n)}
        = c_B^2 \sum_{j=1}^J u_jv_j
        \sum_{n\in \beta^{-1}(\{j\})}\cfrac{1}{q^{2n}}
        = c_B^2 \cfrac{1}{J}\cfrac{q^2}{q^2-1}
        \left\langle u, v \right\rangle_{\R^J} 
        = \left\langle u, v \right\rangle_{\R^J},
    \end{align*}
    which shows that $B$ is an isometry.
\end{proof}

In the following, we draw a correspondence between a RKHS $\hat H$ built with
$\hat\varphi$ from \cref{eqn:embed_X_RJ} and another RKHS $H$ built using
$\varphi$ from \cref{prop:inj_X_l2}. Let $U := \hat\varphi(\X)$, which is a
compact subset of $\R^J$, then let $\hat V := B(U)$, it is a compact subset of
$\ell^2$. Consider the injection $\varphi$ introduced in \cref{prop:inj_X_l2}
with basis $(x_n)$ and scale $q$ as in \cref{prop:B_isometry}. Define $V :=
\varphi(\X),\; W := V\cup \hat V$, which are also compact subsets of $\ell^2$.
We now summarise our objects in the following diagram:
\begin{equation}\label{eqn:embed_RJ_l2_diagram}
    \begin{tikzcd}
        \X \arrow[r, "\varphi"] \arrow[d, "\hat\varphi"] 
        & V \subset W \subset \ell^2 \\
        U \subset \R^J \arrow[r, "B"'] & \hat V \subset W \subset \ell^2
    \end{tikzcd}
\end{equation} We fix a kernel $k_W: W^2 \longrightarrow \R$ which is of Taylor type or radial (see \cref{def:taylor_kernel,def:radial_kernel}) and thus in particular universal on $W$, and introduce its canonical feature map:
\begin{equation}\label{eqn:def_k_W}
    \Phi_W := \app{W}{H_W}{u}{k_W(\cdot, u)},
\end{equation}
where $H_W = \oll{\Span}\left\{k_W(\cdot, u),\; u \in W\right\}\subset
\mathcal{C}(W)$ is the unique RKHS associated to the kernel $k_W$ \cite[Theorem
4.21]{christmann2008support}. Consider the kernels $k, \hat k$ on $\X$ defined
respectively as:
\begin{equation}\label{eqn:def_hat_k}
    k := \app{\X^2}{\R}{(x, y)}{k_W(\varphi(x),
    \varphi(y))},\quad 
    \hat k := \app{\X^2}{\R}{(x, y)}{k_W(B\circ\hat\varphi(x),
    B\circ\hat\varphi(y))}.
\end{equation}
By definition of the feature pair $(H_W, \Phi_W)$ for $k_W$, we observe that for
$x, y \in \X$:
\begin{equation}\label{eqn:k_k_hat_features}
    k(x, y) = \langle\Phi_W\circ\varphi(x), 
    \Phi_W\circ\varphi(y)\rangle_{H_W},\; 
    \hat k(x, y) = \langle\Phi_W\circ B\circ\hat\varphi(x), 
    \Phi_W\circ B\circ\hat\varphi(y)\rangle_{H_W}.     
\end{equation}
The RKHS spaces $H, \hat H$ associated to $k, \hat k$ are both subspaces of
$\mathcal{C}(\X)$ and can be written with the following respective feature pairs
$(H_W, \Phi), (H_W, \hat\Phi)$ (use \cref{eqn:k_k_hat_features} with
\cite[Theorem 4.21]{christmann2008support}):
\begin{align}
    H &= \left\{x\longmapsto\langle h_W, \Phi_W \circ \varphi(x)
    \rangle_{H_W},\; h_W \in H_W\right\},\; 
    \Phi := \app{\X}{H_W}{x}{\Phi_W \circ \varphi(x)}\label{eqn:H_W_H} \\
    \hat H &= \left\{x\longmapsto\langle h_W, \Phi_W \circ B \circ 
    \hat\varphi(x)\rangle_{H_W},\; h_W \in H_W\right\},\;
    \hat\Phi := \app{\X}{H_W}{x}{\Phi_W \circ B \circ \hat\varphi(x)}.
    \label{eqn:H_W_H_hat}
\end{align}
Notice that the feature space $H_W$ is shared. To finish the diagram, we
introduce the ``feature-to-map'' functionals:
\begin{equation}\label{eqn:feature_to_map}
    \Psi := \app{H_W}{H}{h_W}{x \mapsto \langle h_W, \Phi(x) \rangle_{H_W}},\;
    \hat\Psi := \app{H_W}{\hat H}{h_W}{x \mapsto \langle h_W, \hat\Phi(x)
    \rangle_{H_W}}.
\end{equation}
\ukj{By \cref{eqn:H_W_H,eqn:H_W_H_hat}, $\Psi$ and $\hat\Psi$ are surjective.}
Extending the diagram in \cref{eqn:embed_RJ_l2_diagram}, we obtain:
\begin{equation}
    \begin{tikzcd}
        & & V\subset W \subset \ell^2 \arrow[rd, "\Phi_W"] & & H \\
        \X \arrow[rru, "\varphi"] \arrow[rd, "\hat\varphi"] 
        \arrow[rrr, bend left=80, "\Phi"] 
        \arrow[rrr, bend right=80, "\hat\Phi"'] 
        & & & H_W \arrow[ru, "\Psi"] \arrow[rd, "\hat \Psi"] & \\
        & U\subset \R^J \arrow[r, "B"'] 
        & \hat V \subset W \subset \ell^2 \arrow[ru, "\Phi_W"] & & \hat H
    \end{tikzcd}
\end{equation}
Using the inner product correspondence induced by the isometry $B$ from
\cref{prop:B_isometry}, a tractable formula for $\hat k$ is obtained immediately
for Taylor and radial kernels. 
\begin{corollary}\label{cor:tractable_hat_k} The kernel $\hat k$ on $\X$ is
    given by, for all $ x, y \in \X$:
    \begin{itemize}
        \item if $k_W$ is a Taylor kernel (\cref{def:taylor_kernel}):
        \begin{equation}\label{eqn:tractable_hat_k_Taylor}
            \hat k(x, y) 
            = K(\langle \hat \varphi(x), \hat \varphi(y)\rangle_{\R^J}) 
            = \sum_{n=0}^{+\infty}a_n 
            \left(\cfrac{1}{J}\sum_{j=1}^Jd_\X(x, y_j)d_\X(y, y_j)\right)^n;
        \end{equation}
        \item if $k_W$ is a radial kernel (\cref{def:radial_kernel}):
        \begin{equation}\label{eqn:tractable_hat_k_radial}
            \hat k(x, y) 
            = K(\|\hat \varphi(x)-  \hat \varphi(y)\|_{\R^J}^2)
            = \int_{0}^{+\infty}\exp\left(-\cfrac{s}{J}
            \sum_{j=1}^J(d_\X(x, y_j)-d_\X(y, y_j))^2\right)\dd\mu(s).
        \end{equation}
    \end{itemize}
\end{corollary}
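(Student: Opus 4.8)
The plan is to unwind the definition of $\hat k$ in \cref{eqn:def_hat_k} and exploit the isometry property of $B$ established in \cref{prop:B_isometry}. First I would record the two consequences of $B$ being a \emph{linear} isometry $\R^J \to \ell^2$: for all $u, v \in \R^J$, $\langle B(u), B(v)\rangle_{\ell^2} = \langle u, v\rangle_{\R^J}$ and, by linearity, $\|B(u) - B(v)\|_{\ell^2}^2 = \|B(u-v)\|_{\ell^2}^2 = \|u-v\|_{\R^J}^2$. Applying these with $u = \hat\varphi(x)$ and $v = \hat\varphi(y)$ (which lie in $U \subset \R^J$, so that $B\circ\hat\varphi(x), B\circ\hat\varphi(y) \in \hat V \subset W$ and $k_W$ is legitimately evaluated there) immediately reduces both cases to a computation in $\R^J$.

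For the Taylor case, by \cref{def:taylor_kernel} we have $\hat k(x,y) = k_W(B\circ\hat\varphi(x), B\circ\hat\varphi(y)) = K(\langle B\circ\hat\varphi(x), B\circ\hat\varphi(y)\rangle_{\ell^2}) = K(\langle \hat\varphi(x), \hat\varphi(y)\rangle_{\R^J})$. It then remains to substitute the explicit expression of $\hat\varphi$ from \cref{eqn:embed_X_RJ}, giving $\langle \hat\varphi(x), \hat\varphi(y)\rangle_{\R^J} = \frac{1}{J}\sum_{j=1}^J d_\X(x, y_j)\, d_\X(y, y_j)$, and expand $K(t) = \sum_n a_n t^n$; the absolute convergence of the series at this argument is inherited from the fact that $k_W$ is by assumption a well-defined Taylor kernel on $W$, since $B\circ\hat\varphi(x), B\circ\hat\varphi(y) \in W$. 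The radial case is analogous: \cref{def:radial_kernel} gives $\hat k(x,y) = K(\|B\circ\hat\varphi(x) - B\circ\hat\varphi(y)\|_{\ell^2}^2) = K(\|\hat\varphi(x) - \hat\varphi(y)\|_{\R^J}^2)$, and plugging in $\hat\varphi$ yields $\|\hat\varphi(x) - \hat\varphi(y)\|_{\R^J}^2 = \frac{1}{J}\sum_{j=1}^J (d_\X(x,y_j) - d_\X(y,y_j))^2$; substituting into $K(t) = \int_0^{+\infty} e^{-st}\,\dd\mu(s)$ produces the claimed formula.

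There is no real obstacle here: the statement is essentially a bookkeeping consequence of \cref{prop:B_isometry}. The only points deserving a line of care are the legitimacy of each manipulation — that $B\circ\hat\varphi$ indeed maps into $W$ so $k_W$ is applied on its domain (already built into the construction via $\hat V = B(U) \subset W$), and that commuting $K$ with the sum or integral is valid (which follows from the absolute convergence in \cref{def:taylor_kernel} and the finiteness of $\mu$ in \cref{def:radial_kernel}). I would close by noting that the resulting expressions make $\hat k$ tractable precisely because they involve only the finitely many distances $d_\X(\cdot, y_j)$, in contrast with $k$, whose evaluation requires the infinite sequence $\varphi$.
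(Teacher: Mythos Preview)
Your proposal is correct and follows essentially the same approach as the paper: unwind the definition $\hat k(x,y) = k_W(B\circ\hat\varphi(x), B\circ\hat\varphi(y))$, use the isometry property of $B$ from \cref{prop:B_isometry} to pass from $\ell^2$ inner products and norms to their $\R^J$ counterparts, and then substitute the explicit form of $\hat\varphi$ and of $K$ in the Taylor and radial cases. Your additional remarks on domain membership and convergence are sound but not strictly needed beyond what the paper already takes for granted.
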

\begin{proof}
    Let $x, y\in \X$, we remind that from \cref{eqn:def_hat_k} that $\hat k (x,
    y) := k_W(B\circ\hat \varphi(x), B\circ\hat\varphi(y))$. Now by
    \cref{prop:B_isometry}, $B$ is an isometry, yielding:
    $$\langle B\circ\hat \varphi(x), B\circ\hat\varphi(y) \rangle_{\ell^2} =
    \langle \hat \varphi(x), \hat\varphi(y)\rangle_{\R^J};\quad \|B\circ\hat
    \varphi(x)- B\circ\hat\varphi(y)\|_{\ell^2}^2 = \|\hat \varphi(x)-
    \hat\varphi(y)\|_{\R^J}^2.$$ 
    \cref{eqn:tractable_hat_k_Taylor,eqn:tractable_hat_k_radial} are then
    obtained by replacing $k_W$ and $K$ by their definitions in the Taylor and
    radial cases.
\end{proof}
We refer to the expressions in
\cref{eqn:tractable_hat_k_Taylor,eqn:tractable_hat_k_radial} as ``tractable''
since they can be computed explicitly on a computer or approximated efficiently
to numerical precision (note that the measure $\mu$ in the radial kernel can be
discrete with finite support). \ukj{The numerical computation of $\hat\varphi$
is explicit and tractable in the sense that it can be done with a finite amount
of closed-form expressions. For the Taylor kernel, the infinite series can be
approximated to numerical precision, which we also refer to as ``tractable'', as
would be said of the exponential function for instance.}

\subsection{Showing that \texorpdfstring{$\hat H$}{hatH} is Approximately
Universal}\label{sec:approx_univ_Hat}

In this section, we show that the RKHS $\hat H$ introduced in
\cref{sec:build_Hat} is approximately universal on $\X$. We use the notation and
objects constructed in \cref{sec:build_Hat} extensively, \ukj{in particular, the
mapping $\varphi: \X \longrightarrow \ell^2$ is defined using
\cref{prop:inj_X_l2} with a suitable basis $(x_n)$ and scale $q$ from
\cref{prop:adapted_basis}.} The first approximation result we will show concerns
a comparison in $\ell^2$ between $\varphi(x)$ and $B\circ\hat\varphi(x)$:
\begin{prop}\label{prop:approx_l2} For $x\in \X$, we have $\|\varphi(x) -
    B\circ\hat\varphi(x)\|_{\ell^2} \leq \eta$. 
    
    The diameter of $W$ verifies $\diamW := \diam(W) \leq \ukj{2}\diamX$.
\end{prop}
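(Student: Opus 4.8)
The plan is to expand $B\circ\hat\varphi(x)$ coordinate by coordinate and compare it directly to $\varphi(x)$. Writing $B$ as in \cref{prop:B_isometry} and $\hat\varphi$ as in \cref{eqn:embed_X_RJ}, the $n$-th coordinate of $B\circ\hat\varphi(x)$ is $c_B\, d_\X(x,y_{\beta(n)})/(\sqrt{J}\,q^n)$; the key bookkeeping observation is that $c_B/\sqrt{J} = \sqrt{q^2-1}/q = c_\varphi$, so this coordinate equals $c_\varphi\, d_\X(x,y_{\beta(n)})/q^n$, whereas the $n$-th coordinate of $\varphi(x)$ is $c_\varphi\, d_\X(x,x_n)/q^n$. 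Hence the two sequences differ only through the substitution of the basepoint $x_n$ by the covering centre $y_{\beta(n)}$, and $\|\varphi(x)-B\circ\hat\varphi(x)\|_{\ell^2}^2 = c_\varphi^2\sum_{n\geq 0} q^{-2n}\,\lvert d_\X(x,x_n)-d_\X(x,y_{\beta(n)})\rvert^2$.

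Next I would bound each summand. The reverse triangle inequality gives $\lvert d_\X(x,x_n)-d_\X(x,y_{\beta(n)})\rvert \leq d_\X(x_n,y_{\beta(n)})$; by \cref{prop:adapted_basis}, $d_\X(x_n,y_{\beta(n)}) = \min_j d_\X(x_n,y_j)$, and this is at most $\eta$ since the balls $B_{d_\X}(y_j,\eta)$ cover $\X$ and in particular cover the point $x_n$. Summing the geometric series $\sum_{n\geq 0}q^{-2n} = q^2/(q^2-1)$ and using that $c_\varphi$ was defined precisely so that $c_\varphi^2\,q^2/(q^2-1) = 1$, we obtain $\|\varphi(x)-B\circ\hat\varphi(x)\|_{\ell^2}^2 \leq \eta^2$, which is the first claim.

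For the diameter bound, recall $W = V\cup\hat V$ with $V = \varphi(\X)$ and $\hat V = B(\hat\varphi(\X))$. Since $\varphi$ is $1$-Lipschitz (\cref{prop:inj_X_l2}) and $B\circ\hat\varphi$ is $1$-Lipschitz ($\hat\varphi$ is $1$-Lipschitz and $B$ is an isometry by \cref{prop:B_isometry}), both $\diam(V)$ and $\diam(\hat V)$ are at most $\diamX$. The only remaining case in the maximum defining $\diam(W)$ is a pair $\varphi(x)\in V$ and $B\circ\hat\varphi(y)\in\hat V$; inserting $\varphi(y)$ and invoking the first part, $\|\varphi(x)-B\circ\hat\varphi(y)\|_{\ell^2} \leq \|\varphi(x)-\varphi(y)\|_{\ell^2} + \|\varphi(y)-B\circ\hat\varphi(y)\|_{\ell^2} \leq \diamX + \eta \leq 2\diamX$, using $\eta\leq\diamX$. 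Taking the maximum over the three cases yields $\diamW \leq 2\diamX$.

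I do not expect a genuine obstacle here: the whole proof is the identity $c_B/\sqrt{J} = c_\varphi$ (which aligns the normalisations of $\varphi$ and $B\circ\hat\varphi$ so that they agree coordinatewise up to the basepoint swap $x_n \leftrightarrow y_{\beta(n)}$), combined with the defining property of $\beta$ from \cref{prop:adapted_basis} that $y_{\beta(n)}$ is a nearest covering centre to $x_n$ and hence lies within $\eta$ of it. The only point requiring a word of care is that the $\varphi$ in this statement is the specific injection attached to the adapted basis $(x_n)$ and scale $q$ of \cref{prop:adapted_basis}, so that the $\ell^2$-comparison is meaningful.
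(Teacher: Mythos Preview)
Your proof is correct and follows essentially the same route as the paper: the coordinatewise identity $c_B/\sqrt{J}=c_\varphi$, the reverse triangle inequality, the covering bound $d_\X(x_n,y_{\beta(n)})\leq\eta$, and the geometric series for the first claim, then $1$-Lipschitzness plus the first claim (with $\eta\leq\diamX$) for the diameter. Your diameter argument is in fact slightly more carefully written than the paper's, which records the cross-term bound as $2\eta$ rather than the $\diamX+\eta$ that the triangle-inequality insertion actually gives (the final conclusion $\diamW\leq 2\diamX$ is the same either way).
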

\begin{proof}
    Let $n\in \N$, we look at the terms of the sequences $\varphi(x),
    B\circ\hat\varphi(x) \in W\subset\ell^2$:
    \begin{align*}
        \left|[\varphi(x)]_n - [B\circ\hat\varphi(x)]_n\right| &= 
        \left|\cfrac{c_\varphi d_\X(x, x_n)}{q^n} 
        - \cfrac{c_Bd_\X(x, y_{\beta(n)})}{\sqrt{J}q^n}\right|
        \leq \cfrac{1}{q^n}c_\varphi d_\X(x_n, y_{\beta(n)}).
    \end{align*}
    By construction of the covering $(B_{d_\X}(y_j, \eta))_j$ and of $\beta$
    (see \cref{prop:adapted_basis}), $d_\X(x_n, y_{\beta(n)}) \leq \eta$.
    Summing the squares over $n\in \N$ and replacing $c_\varphi =
    \frac{c_B}{\sqrt{J}}$ yields:
    $$\|\varphi(x) - B\circ\hat\varphi(x)\|_{\ell^2}^2 \leq \eta^2
    \sum_{n=0}^{+\infty}\cfrac{c_\varphi^2}{q^{2n}}=\eta^2.$$ For the diameter
    of $W := \varphi(\X)\cup (B\circ\hat\varphi(\X))$, we have by
    1-Lipschitzness of $\varphi, \hat\varphi$ and $B$: $\diam(\varphi(\X)) \leq
    \diamX$ and $\diam(B\circ\hat\varphi(\X)) \leq \diamX$. Using the inequality
    in the above display and the fact that $\eta\leq\diamX$, we conclude: 
    $$\ukj{\diamW = \max(\diam(\varphi(\X)),\ \sup_{x,y\in \X}\|\varphi(x) -
    B\circ\hat\varphi(y)\|_{\ell^2}) \leq \max(\diamX, 2\eta) \leq 2\diamX}.$$
    \vskip -30pt
\end{proof}

Using regularity properties of Taylor and radial kernels, we will show that the
kernel $\hat k$ is approximately universal on $\X$ by relating it to $k$ which
is universal by \cref{thm:univ_kernel_X}. First, we see in
\cref{lemma:Phi_W_Holder} that the canonical feature map $\Phi_W$ is
Hölder-continuous for Taylor kernels, and Lipschitz for radial kernels. We
introduce the radius of $W$: $\radW := \max_{w\in W}\|w\|_{\ell^2}$. Using the
definition of $W$ and of $\varphi,\hat\varphi$ and $B$ with their well-chosen
normalisations, it is easy to see that $\radW \leq \diamX$.
\begin{lemma}\label{lemma:Phi_W_Holder} The feature map $\Phi_W: (W,
    \|\cdot\|_{\ell^2}) \longrightarrow (H_W, \|\cdot\|_{H_W})$ has the
    following regularity:
    \begin{itemize}
        \item If $k_W$ is a Taylor kernel, then $\Phi_W$ is $\frac{1}{2}$-Hölder
        continuous:
        $$\forall u,v \in W,\; \|\Phi_W(u) - \Phi_W(v)\|_{H_W} \leq
        \sqrt{2\diamX C_{K'}}\|u-v\|_{\ell^2}^{\frac{1}{2}},$$ where $C_{K'} :=
        \max_{t\in [-\ukj{4}\diamX^2, \ukj{4}\diamX^2]} |K'(t)|$.
        \item If $k_W$ is a radial kernel, then $\Phi_W$ is
        $\sqrt{2C_{K'}}$-Lipschitz:
        $$\forall u,v \in W,\; \|\Phi_W(u) - \Phi_W(v)\|_{H_W} \leq
        \sqrt{2C_{K'}}\|u-v\|_{\ell^2}, $$ where $C_{K'} := \max_{t\in [0,
        \ukj{4}\diamX^2]}|K'(t)|$.
    \end{itemize}
\end{lemma}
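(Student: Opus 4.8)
The plan is to reduce both cases to the scalar identity furnished by the reproducing property, and then to control a one-variable increment of $K$ by its derivative.

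\textbf{Step 1 (reduction to a scalar estimate).} For $u,v\in W$, expand the squared norm by bilinearity and use the reproducing property $\langle \Phi_W(u),\Phi_W(v)\rangle_{H_W}=\langle k_W(\cdot,u),k_W(\cdot,v)\rangle_{H_W}=k_W(u,v)$ to obtain
$$\|\Phi_W(u)-\Phi_W(v)\|_{H_W}^2 = k_W(u,u)-2k_W(u,v)+k_W(v,v).$$
It therefore suffices to bound this right-hand side in terms of $\|u-v\|_{\ell^2}$, using the explicit form of $k_W$ in each of the two cases, and then to take square roots.

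\textbf{Step 2 (Taylor case).} Substitute $k_W(u,v)=K(\langle u,v\rangle_{\ell^2})$ and split the expression from Step 1 as $\bigl(K(\|u\|_{\ell^2}^2)-K(\langle u,v\rangle)\bigr)+\bigl(K(\|v\|_{\ell^2}^2)-K(\langle u,v\rangle)\bigr)$. Since $\radW\le\diamX$, the three scalar arguments $\|u\|_{\ell^2}^2,\|v\|_{\ell^2}^2,\langle u,v\rangle$ all lie in $[-\diamX^2,\diamX^2]\subseteq[-4\diamX^2,4\diamX^2]$, so by the mean value theorem each bracket is at most $C_{K'}$ times the gap between its two arguments. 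Cauchy--Schwarz gives $|\,\|u\|_{\ell^2}^2-\langle u,v\rangle\,|=|\langle u,u-v\rangle|\le \|u\|_{\ell^2}\,\|u-v\|_{\ell^2}\le\diamX\|u-v\|_{\ell^2}$, and symmetrically for the other bracket. Summing yields $\|\Phi_W(u)-\Phi_W(v)\|_{H_W}^2\le 2\diamX\,C_{K'}\|u-v\|_{\ell^2}$, and taking square roots gives the claimed $\tfrac12$-Hölder bound with constant $\sqrt{2\diamX C_{K'}}$.

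\textbf{Step 3 (radial case).} Substitute $k_W(u,v)=K(\|u-v\|_{\ell^2}^2)$, so that $k_W(u,u)=k_W(v,v)=K(0)$ and Step 1 becomes $\|\Phi_W(u)-\Phi_W(v)\|_{H_W}^2=2\bigl(K(0)-K(\|u-v\|_{\ell^2}^2)\bigr)$. By \cref{prop:approx_l2}, $\|u-v\|_{\ell^2}\le\diamW\le 2\diamX$, so $t:=\|u-v\|_{\ell^2}^2\in[0,4\diamX^2]$; the mean value theorem gives $K(0)-K(t)=-t\,K'(\xi)\le t\,C_{K'}$ for some $\xi\in(0,t)$, where one notes additionally that $|K'(t)|=\int_0^{+\infty}se^{-st}\dd\mu(s)$ is nonincreasing in $t\ge 0$, so the maximum over $[0,4\diamX^2]$ is the relevant bound. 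Hence $\|\Phi_W(u)-\Phi_W(v)\|_{H_W}^2\le 2C_{K'}\|u-v\|_{\ell^2}^2$, and the square root is the claimed Lipschitz bound with constant $\sqrt{2C_{K'}}$.

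The computation itself is routine; the only point requiring care is the bookkeeping of the intervals on which $C_{K'}$ must dominate $|K'|$ — checking via $\radW\le\diamX$ and the diameter bound $\diamW\le 2\diamX$ of \cref{prop:approx_l2} that the scalar arguments fed to $K$ and $K'$ never leave $[-4\diamX^2,4\diamX^2]$ (resp. $[0,4\diamX^2]$), together with the elementary monotonicity of $|K'|$ in the radial case.
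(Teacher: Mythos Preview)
Your proof is correct and follows essentially the same approach as the paper: expand $\|\Phi_W(u)-\Phi_W(v)\|_{H_W}^2$ via the reproducing property, split into two increments of $K$, and control each by the Lipschitz/mean-value bound on $K$ together with Cauchy--Schwarz (Taylor case) or directly (radial case), using $\radW\le\diamX$ and $\diamW\le 2\diamX$ to localise the arguments. Your additional observation that $|K'|$ is nonincreasing in the radial case is true but not needed, since the mean-value point $\xi$ already lies in $[0,4\diamX^2]$.
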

\begin{proof}
    First, we remind that by \cref{prop:approx_l2}, we have $\diam(W) \leq
    \ukj{2}\diamX$. For the proof, we take inspiration from \cite[Section
    4.2]{fiedler2023lipschitz}. Using the reproducing property, we begin
    computations for both kernel types, letting $u,v\in W$:
    \begin{align*}
        \|\Phi_W(u) - \Phi_W(v)\|_{H_W}^2
        &= k_W(u,u) - 2k_W(u,v) + k_W(v,v) \\
        &\leq|k_W(u,u) - k_W(u,v)| + |k_W(v,v) - k_W(u,v)|.
    \end{align*}
    For Taylor kernels, we use the fact that $K$ is $C_{K'}$-Lipschitz on
    $[-\ukj{4}\diamX^2, \ukj{4}\diamX^2]$ and the Cauchy-Schwarz inequality for
    $\langle \cdot, \cdot \rangle_{\ell^2}$:
    \begin{align*}
        \|\Phi_W(u) - \Phi_W(v)\|_{H_W}^2
        &\leq C_{K'}\left(|\langle u, u \rangle_{\ell^2} - \langle u, v
        \rangle_{\ell^2}| + |\langle v, v \rangle_{\ell^2} - \langle u,
        v\rangle_{\ell^2}|\right) \\
        &\leq C_{K'}(\|u\|_{\ell^2} + \|v\|_{\ell^2})
        \|u-v\|_{\ell^2} \\
        &\leq 2\radW C_{K'}\|u-v\|_{\ell^2},
    \end{align*}
    and we conclude using $\radW\leq\diamX$. For radial kernels, we use the fact
    that $K$ is $C_{K'}$-Lipschitz on $[0, \diamW^2]$ (we remind that $K$ is
    non-increasing on $[0, +\infty)$):
    \begin{align*}
        \|\Phi_W(u) - \Phi_W(v)\|_{H_W}^2
        = 2(K(0) - K(\|u-v\|_{\ell^2}^2))
        \leq 2C_{K'}\|u-v\|_{\ell^2}^2.
    \end{align*}
    \vskip-20pt
\end{proof}
We now use \cref{lemma:Phi_W_Holder} to approximate any $h \in H$ with a $\hat h
\in \hat H$ with a certain error, which we approach by comparing the
feature-to-map functionals $\Psi$ and $\hat\Psi$ from
\cref{eqn:H_W_H,eqn:H_W_H_hat}. \newcommand{\rhosentence}{$\rho_0 =
\eta^{\frac{1}{2}}\sqrt{2\diamX C_{K'}}$ for a Taylor kernel and $\rho_0 =
\eta\sqrt{2C_{K'}}$ for a radial kernel.}
\begin{prop}\label{prop:approx_H_H_hat} For $h \in H$, take $h_W \in H_W$ such
    that $h = x \longmapsto \langle h_W, \Phi(x)\rangle_{H_W} = \Psi(h_W)$. Then
    let $\hat h := x \longmapsto \langle h_W, \hat\Phi(x)\rangle_{H_W} =
    \hat\Psi(h_W)$. Denoting $\|\cdot\|_{\infty}$ the supremum norm on $\X$, we
    have:
    \begin{equation}\label{eqn:approx_H_H_hat}
        \|h-\hat h\|_{\infty} \leq \rho_0\|h_W\|_{H_W},
    \end{equation}
    where \rhosentence.     
\end{prop}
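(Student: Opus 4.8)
The plan is to bound $|h(x) - \hat h(x)|$ pointwise in $x \in \X$ and then take the supremum. Fix $x \in \X$. By the definitions of $\Psi$ and $\hat\Psi$ in \cref{eqn:feature_to_map} and the fact that both $h$ and $\hat h$ are built from the \emph{same} element $h_W \in H_W$, we have $h(x) - \hat h(x) = \langle h_W, \Phi(x) - \hat\Phi(x)\rangle_{H_W}$. The Cauchy--Schwarz inequality in $H_W$ then gives $|h(x) - \hat h(x)| \leq \|h_W\|_{H_W}\,\|\Phi(x) - \hat\Phi(x)\|_{H_W}$, so it remains to control $\|\Phi(x) - \hat\Phi(x)\|_{H_W}$ uniformly in $x$.

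Next I would recall from \cref{eqn:H_W_H,eqn:H_W_H_hat} that $\Phi(x) = \Phi_W(\varphi(x))$ and $\hat\Phi(x) = \Phi_W(B\circ\hat\varphi(x))$, with both $\varphi(x)$ and $B\circ\hat\varphi(x)$ lying in $W$. Applying the regularity of $\Phi_W$ from \cref{lemma:Phi_W_Holder} to the pair $(\varphi(x), B\circ\hat\varphi(x)) \in W^2$ gives, in the Taylor case, $\|\Phi(x) - \hat\Phi(x)\|_{H_W} \leq \sqrt{2\diamX C_{K'}}\,\|\varphi(x) - B\circ\hat\varphi(x)\|_{\ell^2}^{1/2}$, and in the radial case $\|\Phi(x) - \hat\Phi(x)\|_{H_W} \leq \sqrt{2 C_{K'}}\,\|\varphi(x) - B\circ\hat\varphi(x)\|_{\ell^2}$. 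Finally I would invoke \cref{prop:approx_l2}, which asserts $\|\varphi(x) - B\circ\hat\varphi(x)\|_{\ell^2} \leq \eta$, to replace the $\ell^2$-distance by $\eta$: this yields the bound $\rho_0\|h_W\|_{H_W}$ with $\rho_0 = \eta^{1/2}\sqrt{2\diamX C_{K'}}$ for a Taylor kernel and $\rho_0 = \eta\sqrt{2C_{K'}}$ for a radial kernel. Taking the supremum over $x \in \X$ closes the argument.

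There is essentially no hard step here: the statement is a direct chaining of Cauchy--Schwarz, the Hölder/Lipschitz regularity of $\Phi_W$ (\cref{lemma:Phi_W_Holder}), and the $\ell^2$-proximity estimate (\cref{prop:approx_l2}). The only point that warrants a moment's care is the observation that $h$ and $\hat h$ are obtained by applying the two different feature-to-map functionals $\Psi$ and $\hat\Psi$ to one common $h_W$, which is exactly what lets the difference $h - \hat h$ be written as a single inner product against $\Phi(x) - \hat\Phi(x)$; without this the pointwise bound would not follow. A minor bookkeeping remark is that the constant $C_{K'}$ is precisely the one appearing in \cref{lemma:Phi_W_Holder} (the maximum of $|K'|$ over $[-4\diamX^2, 4\diamX^2]$, respectively over $[0, 4\diamX^2]$), so no new quantity is introduced.
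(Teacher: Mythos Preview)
Your proposal is correct and follows essentially the same approach as the paper: write $h(x)-\hat h(x)$ as $\langle h_W,\Phi(x)-\hat\Phi(x)\rangle_{H_W}$, apply Cauchy--Schwarz, invoke the H\"older/Lipschitz regularity of $\Phi_W$ from \cref{lemma:Phi_W_Holder}, and finish with the $\ell^2$-bound from \cref{prop:approx_l2}. The paper's proof is identical in structure and in the lemmas used.
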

\begin{proof}
    First, we use the regularity of $\Phi_W$ from \cref{lemma:Phi_W_Holder}: we
    have for $x\in X$,
    \begin{align*}
        |h(x) - \hat h(x)| &= \langle h_W, \Phi(x)
        - \hat\Phi(x)\rangle_{H_W}
        \leq \|h_W\|_{H_W}\|\Phi(x) - \hat\Phi(x)\|_{H_W} \\
        &= \|h_W\|_{H_W}\|\Phi_W\circ\varphi(x) - 
        \Phi_W\circ B\circ\hat\varphi(x)\|_{H_W} \\
        &\leq \tilde{c}\|h_W\|_{H_W}\|\varphi(x) - 
        B\circ\hat\varphi(x)\|_{\ell^2}^s,
    \end{align*}
    where $(\tilde{c}, s)=(\sqrt{2\diamX C_{K'}}, \frac{1}{2})$ for a Taylor
    kernel and $(\tilde{c}, s) = (\sqrt{2C_{K'}}, 1)$ for a radial kernel.
    Combining with \cref{prop:approx_l2}, we obtain \cref{eqn:approx_H_H_hat}.
\end{proof}
Using the universality of the kernel $k$ (thanks to \cref{thm:univ_kernel_X}),
we can frame the result of \cref{prop:approx_H_H_hat} as an approximate
universality property of $\hat k$. Again, the approximation error functions
depend on the type of kernel $k_W$.
\begin{theorem}\label{thm:approx_univ_kernel} Let $\varepsilon>0$ and $f \in
    \mathcal{C}(\X)$, the element $h[\varepsilon, f] \in H$ defined by
    \begin{equation}\label{eqn:h_eps_f}
        h[\varepsilon, f] := 
        \underset{h \in H : \|h - f\|_\infty \leq \varepsilon}
        {\argmin}\ \|h\|_{H}^2        
    \end{equation}
    is well-defined, and there exists $\hat h \in \hat H$ such that:
    \begin{equation}\label{eqn:approx_univ_kernel}
        \|f - \hat h\|_{\infty} \leq \varepsilon +
        \rho_0 \|h[\varepsilon, f]\|_{H},
    \end{equation}
    where \rhosentence.
\end{theorem}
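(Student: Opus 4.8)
The plan is to prove this in two stages, corresponding to the two parts of the statement. First, establishing that $h[\varepsilon, f]$ is well-defined: the constraint set $C_{\varepsilon,f} := \{h \in H : \|h-f\|_\infty \leq \varepsilon\}$ is nonempty by the universality of $k$ (\cref{thm:univ_kernel_X}), since $k$ is a Taylor or radial kernel composed with $\varphi$. It is closed and convex in $H$: convexity is immediate, and closedness follows because $H$ embeds continuously into $\mathcal{C}(\X)$ (the kernel is bounded on the compact $\X$, so $\|h\|_\infty \leq \sqrt{\sup_x k(x,x)}\,\|h\|_H$), hence $H$-convergence implies uniform convergence, and the $\varepsilon$-ball around $f$ in $\|\cdot\|_\infty$ is closed. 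Minimising the strictly convex coercive functional $\|\cdot\|_H^2$ over a nonempty closed convex subset of a Hilbert space has a unique minimiser, so $h[\varepsilon,f]$ is well-defined.

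Second, the approximate universality bound: I would simply set $h := h[\varepsilon, f]$, pick $h_W \in H_W$ with $\Psi(h_W) = h$ (possible since $\Psi$ is surjective by the remark after \cref{eqn:feature_to_map}), and — crucially — choose $h_W$ of minimal norm, i.e. $h_W$ in the orthogonal complement of $\ker \Psi$, so that $\|h_W\|_{H_W} = \|h\|_H$. Then define $\hat h := \hat\Psi(h_W) \in \hat H$. Applying \cref{prop:approx_H_H_hat} gives $\|h - \hat h\|_\infty \leq \rho_0 \|h_W\|_{H_W} = \rho_0 \|h[\varepsilon, f]\|_H$. The triangle inequality combined with $\|f - h\|_\infty \leq \varepsilon$ then yields $\|f - \hat h\|_\infty \leq \varepsilon + \rho_0 \|h[\varepsilon, f]\|_H$, which is exactly \cref{eqn:approx_univ_kernel}.

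The one point requiring care — and the main obstacle — is the equality $\|h_W\|_{H_W} = \|h\|_H$ for the norm-minimal preimage. The map $\Psi: H_W \to H$ is the quotient-type map $h_W \mapsto \langle h_W, \Phi(\cdot)\rangle_{H_W}$; since $\Phi = \Phi_W \circ \varphi$ and $H_W = \oll{\Span}\{\Phi_W(w) : w \in W\}$, one has $\ker\Psi = (\oll{\Span}\{\Phi(x) : x \in \X\})^\perp$, and the restriction of $\Psi$ to $(\ker\Psi)^\perp = \oll{\Span}\{\Phi(x):x\in\X\}$ is an isometry onto $H$ — this is precisely the standard identification of the RKHS of a kernel with a feature-pair representation (\cite{christmann2008support}, Theorem 4.21, relating $H$ to the feature map via the metric surjection). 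So I would invoke that theorem, or give the short direct argument: for $h_W$ in $(\ker\Psi)^\perp$, write $h_W = \sum_i a_i \Phi(x_i)$-limits, compute $\|\Psi(h_W)\|_H$ using the reproducing property of $k$ (which is $\langle \Phi(x), \Phi(y)\rangle_{H_W}$), and check it equals $\|h_W\|_{H_W}$. Everything else is bookkeeping: substituting the explicit values \rhosentence\ from \cref{prop:approx_H_H_hat}.
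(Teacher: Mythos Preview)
Your proposal is correct and follows essentially the same route as the paper: non-emptiness of the constraint set via universality of $k$ (\cref{thm:univ_kernel_X}), closedness and convexity via the continuous embedding $H\hookrightarrow\mathcal{C}(\X)$ for a bounded kernel, Hilbert projection to get $h[\varepsilon,f]$, then the norm identity $\|h\|_H=\min\{\|h_W\|_{H_W}:\Psi(h_W)=h\}$ from \cite{christmann2008support} Theorem~4.21, and finally \cref{prop:approx_H_H_hat} plus the triangle inequality. The only cosmetic difference is that the paper introduces an auxiliary minimiser $h_W[\varepsilon,f]$ over $H_W$ and shows its norm coincides with $\|h[\varepsilon,f]\|_H$, whereas you go directly to the minimal-norm preimage of $h[\varepsilon,f]$; your route is slightly more economical but the content is identical.
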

\begin{proof}
    First, we introduce:
    \begin{equation*}%
        h_W[\varepsilon, f] := 
        \underset{h_W \in H_W : \|\Psi(h_W) - f\|_\infty \leq \varepsilon}
        {\argmin}\ \|h_W\|_{H_W}^2        
    \end{equation*}
    We show that $h_W[\varepsilon, f]$ and $h[\varepsilon, f]$ are well-defined.
    The triangle inequality ensures that the sets $\mathcal{B}_{H_W} := \{h_W
    \in H_W : \|\Psi(h_W) - f\|_\infty \leq \varepsilon\}$ and $\mathcal{B}_H :=
    \{h \in H : \|h - f\|_\infty \leq \varepsilon\}$ are convex. 
    
    \ukj{We now show the continuity of $\Psi$ as a mapping $(H_W,
    \|\cdot\|_{H_W}) \longrightarrow (\mathcal{C}(\X), \|\cdot\|_\infty)$.
    Fixing $x\in \X$, we upper-bound by the Cauchy-Schwarz inequality:
    \begin{equation}\label{eqn:c-s_inequality_Psi}
        |\Psi(h_W)[x]| = |\langle h_W, \Phi(x)\rangle_{H_W}|
        \leq \|h_W\|_{H_W}\|\Phi(x)\|_{H_W},        
    \end{equation}
    then we use the definition $\Phi = \Phi_W \circ \varphi$ to show the
    continuity of $\Phi : (\X, d_\X) \longrightarrow (H_W, \|\cdot\|_{H_W})$: by
    \cref{prop:inj_X_l2}, $\varphi: (\X, d_\X) \longrightarrow (W,
    \|\cdot\|_{\ell^2})$ is continuous, and by \cref{lemma:Phi_W_Holder},
    $\Phi_W: (W, \|\cdot\|_{\ell^2}) \longrightarrow (H_W, \|\cdot\|_{H_W})$ is
    also continuous. Combining \cref{eqn:c-s_inequality_Psi} with the continuity
    of $\Phi$ and the compactness of $\X$ ensures that there exists $C>0$
    independent of $x\in \X$ and $h_W\in H_W$ such that $|\Psi(h_W)[x]|\leq
    C\|h_W\|_{H_W}$, thus $\Psi$ as a mapping $(H_W, \|\cdot\|_{H_W})
    \longrightarrow (\mathcal{C}(\X), \|\cdot\|_\infty)$ is continuous.}

    \ukj{Thanks to the continuity of $\Psi: (H_W, \|\cdot\|_{H_W})
    \longrightarrow (\mathcal{C}(\X), \|\cdot\|_\infty)$, we conclude that
    $\mathcal{B}_{H_W} := \{h_W \in H_W : \|\Psi(h_W) - f\|_\infty \leq
    \varepsilon\}$ is closed in $(H_W, \|\cdot\|_{H_W})$. Regarding
    $\mathcal{B}_H$, by \cite[Lemma 4.23]{christmann2008support}, since the
    kernel $k$ is bounded on $\X$, the inclusion $\iota: (H, \|\cdot\|_{H})
    \longrightarrow (\mathcal{C}(\X), \|\cdot\|_\infty)$ is continuous. We
    deduce the closedness of $\mathcal{B}_H =
    \iota^{-1}\left(\mathcal{B}_{\mathcal{C}(\X)}(f, \varepsilon)\right)$ in
    $(\mathcal{C}(\X), \|\cdot\|_\infty)$.}

    Finally, the sets $\mathcal{B}_{H_W}$ and $\mathcal{B}_{H}$ are
    non-empty since $H = \Psi(H_W)$ is dense in $(\mathcal{C}(X),
    \|\cdot\|_\infty)$. 
    
    We conclude that $\mathcal{B}_{H_W}$, resp. $\mathcal{B}_{H}$ is a non-empty
    closed convex set in the Hilbert space $(H_W, \|\cdot\|_{H_W})$, resp. $(H,
    \|\cdot\|_{H})$, and the Hilbert projection theorem (or directly
    \cite[Theorem 4.10]{rudin1987real}) ensures that $h_W[\varepsilon, f]$,
    resp. $h[\varepsilon, f]$ is uniquely defined. 

    Now, we show that $\|h[\varepsilon, f]\|_H = \|h_W[\varepsilon, f]\|_{H_W}$.
    By \cite[Theorem 4.21]{christmann2008support}, we have for all $h\in H$:
    $$\|h\|_H = \inf\{\|h_W\|_{H_W},\; h=\Psi(h_W)\}.$$ By the same argument as
    before (using \cite[Theorem 4.10]{rudin1987real}), we show that the infimum
    is attained. The equality between norms is then straightforward by
    separating both inequalities and using $H = \Psi(H_W)$.
    
    To obtain \cref{eqn:approx_univ_kernel}, we take $h := \Psi(h_W[\varepsilon,
    f])$ in \cref{eqn:approx_H_H_hat} and $\hat h := \hat\Psi(h_W[\varepsilon,
    f]) \in \hat H$, and apply the triangle inequality for $\|\cdot\|_\infty$,
    using $\|h-f\|_\infty\leq\varepsilon$ and $\|h[\varepsilon, f]\|_H =
    \|h_W[\varepsilon, f]\|_{H_W}$.
\end{proof}
The approximation result in \cref{eqn:approx_univ_kernel} shows that $\hat k$ is
$\rho$-approximately universal (\cref{def:approximate_universal_kernel}) for
$\rho(f) := \rho_0\|h[\varepsilon, f]\|_{H}$. In the case where $\X$ is of
dimension $d$ (or has intrinsic dimension $d$), the number of covering balls
scales as $J=\mathcal{O}(\eta^{-d})$, which does not impact the approximation
rate, as is commonly the case in kernel methods which do not suffer from the
curse of dimensionality (see for example \cite[Section 4.1]{gretton2012kernel}).
However, as is typically the case for discretisation methods, the rate
$J=\mathcal{O}(\eta^{-d})$ is computationally prohibitive for small
discretisation step $\eta$ in high dimension $d$.

From a functional standpoint, a larger oscillation (a large value for $C_{K'} =
\max_{t\in [-\diamX^2, \diamX^2]}|K'(t)|$ e.g. for the Taylor case), of the
function $K$ worsens the error, which could be understood as excessive locality
or over-fitting. Finally, the error term $\rho(f)$ is relative in the sense that
it depends on $\|h[\varepsilon, f]\|_{H}$, which is the smallest possible norm
of an $\varepsilon$-approximation of $f$ within $H$, and can be seen as a
measure of complexity of $f$ (in loose terms). This term depends on $q$, and
while the exact dependence is unclear, we expect it to grow as $q$ increases.

\subsection{An Approximate Universal Truncated Kernel}\label{sec:truncated_kernels}

In this section, we consider another approximate universal kernel which is
obtained by truncation of $\varphi$. \ukj{We will undergo a similar process as
the construction of $\hat H$ in \cref{sec:build_Hat} and follow closely the
proof methods of \cref{sec:approx_univ_Hat}}.

A natural idea is to simply consider a truncation of the mapping $\varphi$ from
\cref{prop:inj_X_l2}: fixing a basis $(x_n)_{n\in \N}$ of $\X$, a discretisation
size $N\geq 2$ and scale $q>1$, consider the mapping:
\begin{equation*}
    \varphi_t := \app{\X}{\R^N}{x}{\left(\cfrac{c_\varphi d_\X(x, x_n)}{q^j}
    \right)_{n \in \llbracket 0, N-1 \rrbracket}}.
\end{equation*}
Straightforward computation shows that $\varphi_t: (\X, d_\X) \longrightarrow
(\ell^2, \|\cdot\|_{\ell^2})$ is $\sqrt{1-q^{-2N}}$-Lipschitz. We introduce the
``padding'' isometry:
\begin{equation*}
    B_t := \app{\R^N}{\ell^2}{(u_n)_{n=0}^{N-1}}
    {(u_0, \cdots, u_{N-1}, 0, \cdots)},
\end{equation*}
Similarly to \cref{sec:build_Hat}, we take $V:=\varphi(\X),\; U_t :=
\varphi_t(\X)$ and $V_t := B_t(U_t)$, allowing us to introduce the compact set
$W :=V\cup V_t \subset \ell^2$ (we use the same notation as in
\cref{sec:build_Hat} to alleviate notation). Take $k_W$ a Taylor or radial
kernel on $W$, and introduce the kernel:
\begin{equation*}
    k_t := \app{\X^2}{\R}{(x, y)}{k_W(B_t\circ\varphi_t(x), 
    B_t\circ\varphi_t(y))}.
\end{equation*}
We continue with the feature pair $(H_W, \Phi_t)$ for the RKHS $H_t$ associated
to $k_t$, where:
\begin{equation*}
    \Phi_t := \app{\X}{H_W}{x}{\Phi_W\circ B_t \circ \varphi_t(x)}.
\end{equation*}
\ukj{As in \cref{eqn:feature_to_map} we introduce the ``feature-to-map''
functionals:
\begin{equation}\label{eqn:feature_to_map_t}
    \Psi := \app{H_W}{H}{h_W}{x \mapsto \langle h_W, \Phi(x) \rangle_{H_W}},\;
    \Psi_t := \app{H_W}{H_t}{h_W}{x \mapsto \langle h_W, \Phi_t(x)
    \rangle_{H_W}},
\end{equation}}
and finish the diagram:
\begin{equation*}
    \begin{tikzcd}
        & & V\subset W \subset \ell^2 \arrow[rd, "\Phi_W"] & & H \\
        \X \arrow[rru, "\varphi"] \arrow[rd, "\varphi_t"] 
        \arrow[rrr, bend left=80, "\Phi"] 
        \arrow[rrr, bend right=80, "\Phi_t"'] 
        & & & H_W \arrow[ru, "\Psi"] \arrow[rd, "\Psi_t"] & \\
        & U_t\subset \R^N \arrow[r, "B_t"'] 
        & V_t \subset W \subset \ell^2 \arrow[ru, "\Phi_W"] & & H_t
    \end{tikzcd}
\end{equation*}
The computation in the proof of \cref{cor:tractable_hat_k} stands, but the
coefficients in the expression of $\varphi_t$ lead to a different expression for
$k_t$, which is a truncated version of $k$: if $k_W$ is a Taylor kernel, we
have:
\begin{equation*}
    k_t(x, y) = K\left(\langle \varphi_t(x), \varphi_t(y)\rangle_{\R^N}\right) 
    = \sum_{n=0}^{+\infty}a_n\left(\sum_{m=0}^{N-1}
    \cfrac{c_\varphi^2 d_\X(x, x_m)d_\X(y, x_m)}{q^{2m}}\right)^n,
\end{equation*}
and likewise for radial kernels:
\begin{equation*}
    k_t(x, y) = K\left(\|\varphi_t(x) - \varphi_t(y)\|_{\R^N}^2\right)
    = \int_{0}^{+\infty}\exp\left(-s\sum_{n=0}^{N-1}
    \cfrac{c_\varphi^2(d_\X(x, x_n) - d_\X(y, x_n))^2}{q^{2n}}\right)\dd\mu(s).
\end{equation*}
We now adapt \cref{prop:approx_l2} to $k_t$:
\begin{prop}\label{prop:approx_l2_truncation}
    For $x \in \X$, we have $\|\varphi(x) - B_t\circ\varphi_t(x)\|_{\ell^2} \leq
    \frac{\diamX}{q^N}.$ 
    
    The diameter of $W$ verifies $\diamW \leq \ukj{2}\diamX$.
\end{prop}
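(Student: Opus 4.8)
The plan is to mirror the proof of \cref{prop:approx_l2} exactly, exploiting the fact that $B_t\circ\varphi_t(x)$ agrees with $\varphi(x)$ on the first $N$ coordinates (indices $0$ through $N-1$) and is zero afterwards. First I would write out the two sequences term by term: for $n \in \llbracket 0, N-1\rrbracket$, the $n$-th coordinate of $\varphi(x)$ is $c_\varphi d_\X(x,x_n)/q^n$, and so is the $n$-th coordinate of $B_t\circ\varphi_t(x)$; hence these terms cancel. For $n \geq N$, the $n$-th coordinate of $B_t\circ\varphi_t(x)$ is $0$, so the difference is exactly $c_\varphi d_\X(x,x_n)/q^n$. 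Using $d_\X(x,x_n) \leq \diamX$ and summing the squares of the tail,
\begin{align*}
\|\varphi(x) - B_t\circ\varphi_t(x)\|_{\ell^2}^2
&= \sum_{n=N}^{+\infty}\cfrac{c_\varphi^2 d_\X(x, x_n)^2}{q^{2n}}
\leq c_\varphi^2 \diamX^2 \sum_{n=N}^{+\infty}\cfrac{1}{q^{2n}}
= c_\varphi^2 \diamX^2 \cfrac{q^{-2N}}{1 - q^{-2}}
= \cfrac{\diamX^2}{q^{2N}},
\end{align*}
where the last step uses $c_\varphi^2 = (q^2-1)/q^2 = 1 - q^{-2}$. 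Taking square roots gives the first claim.

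For the diameter bound, I would follow the same reasoning as in \cref{prop:approx_l2}: since $\varphi$ is $1$-Lipschitz we have $\diam(\varphi(\X)) \leq \diamX$, and since $B_t\circ\varphi_t$ is $\sqrt{1-q^{-2N}}$-Lipschitz (noted just above) we have $\diam(B_t\circ\varphi_t(\X)) \leq \diamX$ as well. Then $\diamW = \max\bigl(\diam(\varphi(\X)),\, \sup_{x,y}\|\varphi(x) - B_t\circ\varphi_t(y)\|_{\ell^2}\bigr)$, and for any $x,y\in \X$ the triangle inequality gives $\|\varphi(x) - B_t\circ\varphi_t(y)\|_{\ell^2} \leq \|\varphi(x)-\varphi(y)\|_{\ell^2} + \|\varphi(y) - B_t\circ\varphi_t(y)\|_{\ell^2} \leq \diamX + \diamX/q^N \leq 2\diamX$, so $\diamW \leq 2\diamX$.

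There is no real obstacle here — the argument is entirely routine, and in fact cleaner than \cref{prop:approx_l2} because the truncation error comes from an honest geometric tail rather than a covering-radius estimate. The only point requiring the slightest care is tracking the constant $c_\varphi$ through the tail sum to land on the clean bound $\diamX/q^N$; everything else is a direct transcription of the earlier proof.
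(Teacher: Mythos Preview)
Your proposal is correct and follows essentially the same approach as the paper: you identify that the first $N$ coordinates coincide, bound the tail by $\diamX$ and sum the geometric series using $c_\varphi^2 = (q^2-1)/q^2$, then handle the diameter via 1-Lipschitzness and the triangle inequality exactly as in \cref{prop:approx_l2}. The paper's own proof is in fact terser on the diameter step (it simply refers back to \cref{prop:approx_l2}), so your version is a faithful and slightly more explicit rendering of the same argument.
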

\begin{proof}
    For $n \in \llbracket 0, N-1 \rrbracket,$ by construction $[\varphi(x)]_n =
    [B_t\circ\varphi_t(x)]_n$. For $n\geq N$, we have:
    \begin{align*}
        \left|[\varphi(x)]_n - [B_t\circ\varphi_t(x)]_n\right| 
        = \cfrac{c_\varphi d_\X(x, y_n)}{q^n},
    \end{align*}
    and by bounding the distance term by $\diamX$, and summing the squares,
    we obtain:
    \begin{align*}
        \|\varphi(x) - B_t\circ\varphi_t(x)\|_{\ell^2}^2 
        \leq \sum_{n=N}^{+\infty}\cfrac{c_\varphi^2 \diamX^2}{q^{2n}} 
        = \cfrac{c_\varphi^2 \diamX^2 q^2}{q^{2N}(q^2 - 1)} 
        = \cfrac{\diamX^2}{q^{2N}}.
    \end{align*}
    As for the result on $\diamW$, it follows from 1-Lipschitzness as done in
    \cref{prop:approx_l2}.
\end{proof}
As in \cref{sec:approx_univ_Hat}, it is easy to verify that $\radW :=
\max_{w\in W}\|w\|_{\ell^2}\leq\diamX$. Following the same steps as in
\cref{thm:approx_univ_kernel}, we show a similar result for $k_t$, replacing
$\eta$ with $\diamX q^{-N}$:
\begin{theorem}\label{thm:approx_univ_kernel_truncation} Let $\varepsilon>0$ and
    $f \in \mathcal{C}(\X)$, there exists $h_t \in H_t$ such that:
    \begin{equation}\label{eqn:approx_univ_kernel_truncation}
        \|f - h_t\|_{\infty} \leq \varepsilon +
        \rho_t\|h[\varepsilon, f]\|_{H},
    \end{equation}
    where $\rho_t = q^{-N/2}\diamX\sqrt{2C_{K'}}$ for a Taylor kernel and
    $\rho_t = q^{-N}\diamX\sqrt{2C_{K'}}$ for a radial kernel, with the
    constants $C_{K'}$ as in \cref{lemma:Phi_W_Holder}.
\end{theorem}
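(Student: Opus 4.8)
The plan is to transcribe the proof of \cref{thm:approx_univ_kernel} with the pair $(\hat\varphi, B)$ replaced by $(\varphi_t, B_t)$, the functional $\hat\Psi$ replaced by $\Psi_t$, and \cref{prop:approx_l2} replaced by \cref{prop:approx_l2_truncation}. Since $k$ is the kernel $(x,y)\mapsto k_W(\varphi(x),\varphi(y))$ for the injection $\varphi$ of \cref{prop:inj_X_l2}, it is universal by \cref{thm:univ_kernel_X}, so the diagram and the feature pairs $(H_W,\Phi)$, $(H_W,\Phi_t)$ constructed before the statement are valid and $H = \Psi(H_W)$ is dense in $(\mathcal{C}(\X),\|\cdot\|_\infty)$.

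First I would observe that \cref{lemma:Phi_W_Holder} applies verbatim to the present $W = V\cup V_t$: its proof only uses $\diam(W)\leq 2\diamX$ and $\radW\leq\diamX$, and both hold here by \cref{prop:approx_l2_truncation} and the observation following it. Hence $\Phi_W$ is $\tfrac12$-Hölder with constant $\sqrt{2\diamX C_{K'}}$ for a Taylor kernel, and $\sqrt{2C_{K'}}$-Lipschitz for a radial kernel, with $C_{K'}$ exactly as in \cref{lemma:Phi_W_Holder}. Next I would prove the analogue of \cref{prop:approx_H_H_hat}: for $h\in H$ write $h = \Psi(h_W)$ with $h_W\in H_W$ and set $h_t := \Psi_t(h_W)\in H_t$; then for $x\in\X$, Cauchy--Schwarz gives $|h(x) - h_t(x)| = |\langle h_W, \Phi(x) - \Phi_t(x)\rangle_{H_W}| \leq \|h_W\|_{H_W}\,\|\Phi_W\circ\varphi(x) - \Phi_W\circ B_t\circ\varphi_t(x)\|_{H_W}$, and applying the regularity of $\Phi_W$ followed by the bound $\|\varphi(x) - B_t\circ\varphi_t(x)\|_{\ell^2}\leq \diamX q^{-N}$ of \cref{prop:approx_l2_truncation} yields $\|h - h_t\|_\infty \leq \rho_t\|h_W\|_{H_W}$, where $\rho_t = \sqrt{2\diamX C_{K'}}\,(\diamX q^{-N})^{1/2} = q^{-N/2}\diamX\sqrt{2C_{K'}}$ in the Taylor case and $\rho_t = \sqrt{2C_{K'}}\cdot\diamX q^{-N}$ in the radial case, i.e.\ exactly the constants in the statement.

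Finally, I would invoke that, $k$ being universal and bounded, the minimiser $h[\varepsilon,f]$ of \cref{eqn:h_eps_f} is well-defined and that the associated $h_W[\varepsilon,f] := \argmin\{\|h_W\|_{H_W} : \|\Psi(h_W) - f\|_\infty \leq \varepsilon\}$ satisfies $\|h[\varepsilon,f]\|_H = \|h_W[\varepsilon,f]\|_{H_W}$; this is verbatim the argument in the proof of \cref{thm:approx_univ_kernel} (continuity of $\Psi$ from $(H_W,\|\cdot\|_{H_W})$ to $(\mathcal{C}(\X),\|\cdot\|_\infty)$, continuity of the inclusion $H\hookrightarrow\mathcal{C}(\X)$ via \cite{christmann2008support} Lemma 4.23, convexity and closedness of the constraint sets, and the Hilbert projection theorem). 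Taking $h := \Psi(h_W[\varepsilon,f])$ and $h_t := \Psi_t(h_W[\varepsilon,f])\in H_t$ and combining $\|f - h\|_\infty\leq\varepsilon$ with the bound of the previous paragraph through the triangle inequality for $\|\cdot\|_\infty$ gives $\|f - h_t\|_\infty \leq \varepsilon + \rho_t\|h[\varepsilon,f]\|_H$, which is \cref{eqn:approx_univ_kernel_truncation}.

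Since every step is a direct copy of one already carried out in \cref{sec:approx_univ_Hat}, I do not expect a genuine obstacle; the only point requiring care is checking that the normalisations in $\varphi_t$ and $B_t$ still yield $\radW\leq\diamX$ and $\diamW\leq 2\diamX$ — so that \cref{lemma:Phi_W_Holder} and the stated form of $C_{K'}$ remain valid — which is precisely the content of \cref{prop:approx_l2_truncation} and the remark following it.
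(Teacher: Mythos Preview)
Your proposal is correct and follows essentially the same approach as the paper: adapt \cref{prop:approx_H_H_hat} by replacing \cref{prop:approx_l2} with \cref{prop:approx_l2_truncation} (so that $\eta$ becomes $\diamX q^{-N}$), and then rerun the argument of \cref{thm:approx_univ_kernel} verbatim. The paper's proof is just a two-sentence sketch of exactly this, and your version spells out the constants and the applicability of \cref{lemma:Phi_W_Holder} a bit more carefully.
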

\begin{proof}
    \ukj{We follow the same progression as in the proof of
    \cref{thm:approx_univ_kernel}. First, we follow the proof of
    \cref{prop:approx_H_H_hat}, applying \cref{lemma:Phi_W_Holder}, then
    upper-bounding the term $\|\varphi(x)-B_t\circ\varphi_t(x)\|_{\ell^2}^2$
    using \cref{prop:approx_l2_truncation}, and the only difference is the
    replacement of the term $\eta$ with $\diamX q^{-N}$. Having adapted
    \cref{prop:approx_H_H_hat}, the proof of
    \cref{thm:approx_univ_kernel_truncation} follows as in
    \cref{thm:approx_univ_kernel}.}
\end{proof}
To compare with the rate from \cref{thm:approx_univ_kernel}, we see that the
term $\eta$ is replaced by $\diamX q^{-N}$. While $q^{-N}$ becomes rapidly
smaller as $q$ increases, we suspect the term $\|h[\varepsilon, f]\|_{H}$ to
grow quickly as $q$ increases, which would favour the kernel $\hat k$ from
\cref{sec:build_Hat}. \ukj{From an intuitive standpoint, the quality of the
truncation approximation depends on how well the truncated basis
$(x_n)_{n=0}^{N-1}$ represents $\X$. For example, if $\X$ is a manifold of
$\R^d$ with two connected components, and the first $N$ elements are all in the
first component, it can be expected that a substantial part of the information
about the space is lost, hindering function approximation. This issue can arise
because the ``basis'' property of $(x_n)$ relates to the full sequence, whereas
truncation focuses on the first $N$ terms, which are not assumed to satisfy
particular conditions. We saw in \cref{sec:build_Hat,sec:approx_univ_Hat} a more
principled discretisation approach, which is in some sense a refinement of the
truncation principle.}
\subsection*{Acknowledgements}

We thank Joan Glaunès for carefully proofreading this work and for his valuable
insight.

This research was funded in part by the Agence nationale de la recherche (ANR),
Grant ANR-23-CE40-0017 and by the France 2030 program, with the reference
ANR-23-PEIA-0004.

\printbibliography
\end{document}